\numberwithin{equation}{section}
\newtheorem{theorem}[equation]{Theorem}
\newtheorem{lemma}[equation]{Lemma}
\newtheorem*{conjecture*}{Conjecture}
\newtheorem*{convention*}{Convention}
\newtheorem*{notation*}{Notation}
\newtheorem*{question*}{Question}
\theoremstyle{definition}
\newtheorem{proposition}[equation]{Proposition}
\theoremstyle{remark}
\newtheorem{remark}[equation]{Remark}
\newcommand{\F}{\mathbb{F}}
\newcommand{\Q}{\mathbb{Q}}
\newcommand{\Z}{\mathbb{Z}}
\newcommand{\wt}{\widetilde}
\newcommand{\wh}{\widehat}
\newcommand{\xra}{\xrightarrow}
\newcommand{\del}{\partial}
\newcommand{\opn}{\operatorname}
\newcommand{\ind}{\opn{ind}}
\newcommand{\ord}{\opn{ord}}
\newcommand{\per}{\opn{per}}
\newcommand{\perv}{\underline{{\opn{per}}}}
\newcommand{\an}[1]{\langle{#1}\rangle}
\begin{document}

\title{On The Topological Period-Index Problem over 8-manifolds}

\author{Diarmuid Crowley}
\address{School of Mathematics and Statistics, The University of Melbourne, Parkville VIC 3010, Australia}
\curraddr{}
\email{dcrowley@unimelb.edu.au}


\thanks{}

\author{Xing Gu}
\address{School of Mathematics and Statistics, The University of Melbourne, Parkville VIC 3010, Australia}
\email{xing.gu@unimelb.edu.au}
\thanks{}

\author{Christian Haesemeyer}
\address{School of Mathematics and Statistics, The University of Melbourne, Parkville VIC 3010, Australia}
\email{christian.haesemeyer@unimelb.edu.au}
\subjclass[2000]{}

\date{}

\dedicatory{}

\keywords{Brauer groups, twisted K-theory, period-index problems.}

\begin{abstract}
We establish upper bounds of the indices of topological Brauer classes over a closed orientable $8$-manifolds. In particular, we verify the Topological Period-Index Conjecture (TPIC) for topological Brauer classes over closed orientable $8$-manifolds of order not congruent to $2\pmod{4}$.
In addition, we provide a counter-example which shows that the 
TPIC fails in general for closed orientable 8-manifolds.
\end{abstract}

\maketitle
\section{Introduction}\label{sec:intro}
Let $X$ be a topological space. An Azumaya algebra $P$ over $X$ of degree $r$ is a principal $PU_r$-bundle over $X$ (or equivalently, a sheaf of algebras over the continuous functions on $X$ locally isomorphic to a matrix algebra). To such a principal bundle, one may associate a class $\alpha$ in the topological Brauer group $\opn{Br}_{top}(X):=TH^3(X)$ (where we write $TA$ for the torsion in an abelian group $A$ and throughout $H^*(X)$ denotes the integral cohomology groups of $X$). The order of $\alpha$ in the topological Brauer group is traditionally called the period, denoted $\opn{per}(\alpha)$, and it divides the degree of $P$. If $X$ is a finite $CW$-complex, every class $\alpha\in \opn{Br}_{top}(X)$ arises in this fashion, and it is a natural question what the possible degrees of Azumaya algebras $P$ with class $\alpha$ are. Following Antieau and Williams in \cite{antieau2014period}, we call the greatest common divisors of these degrees the index of $\alpha$, denoted $\opn{ind}(\alpha)$. As shown in {\it loc.\;cit.}, the period and index of $\alpha$ have the same prime divisors; it follows that the index divides some power of the period. This insight gives rise to the following problem:

\begin{question*}[Topological Period-Index Problem]
Let $X$ be a topological space, homotopy equivalent to a finite CW-complex. Is there a bound $e = e(X)$,  computable in terms of ``easy" topological invariants of $X$, such that for every class $\alpha\in \opn{Br}_{top}(X)$ we have  $\opn{ind}(\alpha)|\opn{per}(\alpha)^e$?
\end{question*}

\begin{remark}
It follows from our hypotheses that the Brauer group of $X$ is finite, and thus there is {\it some} bound $e$ depending on $X$. The problem is to describe it using other topological invariants of $X$.
\end{remark}

 As explained in \cite{antieau2013topological}, analogy with a major conjecture (known as the ``Period-Index Conjecture", see \cite{Co}) in the study of central simple algebras over function fields leads to a naive guess, namely that for $X$ of even dimension $2d$, one can take $e(X) = d-1$. Antieau and Williams give an algebraic-topological formula for the index of a Brauer class on CW complexes of dimension at most $6$, and use it to prove that the naive guess fails for $d\geq 3$. On the other hand, the first author and Grant show in \cite{crowley2018topological} that the naive guess is true for compact $\opn{spin}^c$ manifolds of dimension $6$. With this in mind, we state the following not as a conjecture to be proved, but rather as a hypothesis to be considered for certain classes of spaces such as closed orientable manifolds, $\opn{spin}^c$ manifolds, complex projective manifolds, etc.. We will refer to this hypothesis as the Topological Period-Index Conjecture (TPIC) for finite CW complexes, closed orientable manifolds, $\opn{spin}^c$ manifolds, complex projective manifolds, etc..





\begin{conjecture*}[The Topological Period-Index Conjecture]
Let $X$ be a topological space homotopy equivalent to a finite CW complex of dimension $2d$, and let $\alpha\in\opn{Br}_{top}(X)$, then
\[\opn{ind}(\alpha)|\opn{per}(\alpha)^{d-1}.\]
\end{conjecture*}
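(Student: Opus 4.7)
The plan is to recast the problem in twisted $K$-theory. By a theorem of Antieau and Williams \cite{antieau2014period}, $\opn{ind}(\alpha)$ is the greatest common divisor of the positive ranks of classes in the twisted $K$-group $K^0_\alpha(X)$; equivalently, it is the gcd of degrees of $\alpha$-twisted vector bundles. Writing $p = \opn{per}(\alpha)$, it therefore suffices to produce one element in $K^0_\alpha(X)$ of rank exactly $p^{d-1}$, and then to represent it by an honest twisted bundle (which is automatic once the rank exceeds roughly $\dim X$).

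The natural tool for this is the twisted Atiyah--Hirzebruch spectral sequence with $E_2^{s,t} = H^s(X;\Z)$ for $t$ even, zero for $t$ odd, converging to $K^{s+t}_\alpha(X)$. The target class on the $E_2$-page is $p^{d-1}\cdot 1 \in E_2^{0,0} = H^0(X;\Z)$, the twisted analogue of a rank $p^{d-1}$ trivial bundle. The first nonzero differential is $d_3 = \opn{Sq}^3_\Z - (\alpha\cup\,\cdot\,)$, and on $H^0$ this reduces to cup product with $\alpha$; since $p\alpha = 0$, we have $d_3(p^{d-1}\cdot 1) = p^{d-2}(p\alpha) = 0$.

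What remains is to kill the differentials $d_5, d_7, \ldots, d_{2d+1}$ applied to $p^{d-1}\cdot 1$ and to resolve the extension problem near the zero column. Each of these higher differentials is a higher cohomology operation involving $\alpha$, conceptually a Massey product in $\alpha$ together with integral Steenrod operations. For an $8$-manifold only $d_5$ and $d_7$ need be analyzed, as $H^9(X;\Z) = 0$. The concrete calculation would proceed by expressing these operations, applied to $p^{d-1}\cdot 1$, as explicit classes in $H^5(X;\Z)$ and $H^7(X;\Z)$, and then arguing they vanish using the relation $p\alpha = 0$ together with an assumption on the $2$-primary part of $p$.

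The main obstacle is precisely the analysis of $d_5$ and $d_7$ on $p^{d-1}\cdot 1$ together with the extension problem. The $2$-primary contributions, i.e., Steenrod squares and their secondary lifts, are what make things delicate, and I would expect the hypothesis $p\not\equiv 2\pmod 4$ from the abstract to be precisely the cleanest condition eliminating the dangerous terms. Without such a hypothesis, the same calculation should produce a genuinely nonzero obstruction, pointing toward the $8$-manifold counterexample to the TPIC promised in the abstract. So while the twisted $K$-theoretic strategy is the right framework, the unrestricted conjecture as stated cannot be proved in general; the real content must lie in making these obstructions computable, in classifying when they do and do not vanish, and in constructing an explicit space witnessing failure when they do not.
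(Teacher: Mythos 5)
The statement you were asked to prove is the Topological Period-Index Conjecture, which the paper states explicitly as a hypothesis to be tested rather than a theorem, and which the paper then \emph{disproves} in general: Theorem \ref{thm:egs}(1) (via Proposition \ref{prop:ceg}) exhibits a closed orientable non-spin$^c$ $8$-manifold $M$ with $\alpha \in TH^3(M)$, $\perv(\alpha) = (2,4,2)$, hence $\ind(\alpha) = \per(\alpha)^4 \nmid \per(\alpha)^3$. You deserve credit for recognizing in your final paragraph that the conjecture is not provable in general, that the positive content must be conditional, and that the obstruction should yield a counterexample; this matches the structure of the paper exactly.

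Where your sketch falls short is in the part that can actually be proved, namely Theorem \ref{thm:main}. There is no extension problem to solve: Theorem \ref{thm:AH diff} says $\widetilde{E}_\infty^{0,0} = \ind(\alpha)\Z \subset \Z$, so the index is read off directly from the differentials $\widetilde{d}_5^{0,0}$ and $\widetilde{d}_7^{0,0}$ (nothing higher contributes for an $8$-manifold). The substantive gap is that you treat $\widetilde{d}_5$ and $\widetilde{d}_7$ as abstract higher operations to be killed formally using $p\alpha = 0$, whereas the actual improvement over the general $8$-complex bound of Theorem \ref{thm:tpip 8-cplex} is entirely manifold-specific: the paper uses Poincar\'e duality, the linking pairing, and the mod~$3$ Wu class identity $v_1 = \rho_3(p_1)$ (Proposition \ref{pro:mod 3 Wu classes}) to build a symmetric trilinear form on $TH^2(M)\otimes\Z/3$, and Lemma \ref{lem:diag3} on characteristic elements of cubic forms produces $z_1, z_2 \in H^2(M)$ with $\beta^{\Z/3}(x^3) = \beta^{\Z/3}(x)(p_1 - z_1 z_2)$ (Lemma \ref{lem:Mod 3 relation}). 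Fed into the module structure of the twisted AHSS over the untwisted one, together with $\widetilde{d}_3(y) = \opn{Sq}^3_\Z(y) \pm y\alpha$, this shows $nR_n(\xi) \in \opn{im}(\widetilde{d}_3^{4,-4})$ (Proposition \ref{pro:d3}), killing the $3$-primary factor $\epsilon_3(n)$ in the universal $\widetilde{d}_7$ formula and yielding $\ind(\alpha) \mid \per(\alpha)^3$ (or $2\per(\alpha)^3$ when $\per(\alpha) \equiv 2 \pmod 4$). Without Wu classes, cubic forms, or any use of the closed oriented structure, your sketch would only reproduce the weaker bounds for arbitrary $8$-complexes, which do not imply the TPIC even when $\per(\alpha) \not\equiv 2 \pmod 4$.
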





In this paper we establish upper bounds of the indices of topological Brauer classes over closed orientable $8$-manifolds, and in particular verify the TPIC for topological Brauer classes over closed orientable $8$-manifolds of order not congruent to $2\pmod{4}$.

\begin{theorem}\label{thm:main}
Let $M$ be a closed orientable manifold of dimension $8$ and $\alpha\in TH^3(M)$ a topological Brauer class. Then we have
\begin{equation*} 
\begin{cases}
\opn{ind}(\alpha)|2\opn{per}(\alpha)^3, \textrm{ if }\opn{per}(\alpha)\equiv 2 \!\!\! \pmod{4},\\
\opn{ind}(\alpha)|\opn{per}(\alpha)^3, \textrm{ otherwise}.
\end{cases}
\end{equation*}
In particular, the TPIC holds if $\opn{per}(\alpha)\not\equiv 2 \! \pmod{4}$.
\end{theorem}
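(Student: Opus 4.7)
The plan is to use obstruction theory for lifting the classifying map $\alpha \colon M \to K(\Z, 3)$ of the Brauer class through the fibration $BPU_r \to K(\Z, 3)$ classifying Azumaya algebras of degree $r$. Since $\ind(\alpha) \mid r$ precisely when such a lift exists, the theorem reduces to exhibiting a lift for $r = \per(\alpha)^3$ in general, and for $r = 2\per(\alpha)^3$ when $\per(\alpha) \equiv 2 \pmod 4$.

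In the stable range (certainly valid for the values of $r$ of interest), the nontrivial homotopy groups of $BPU_r$ are $\pi_2 = \Z/r$ together with $\pi_{2j} = \Z$ for $j \geq 2$, while all odd homotopy groups vanish. Consequently the first Postnikov stage of $BPU_r$ is $K(\Z/r, 2)$, and $\alpha$ lifts to $\wt\alpha \colon M \to K(\Z/r, 2)$ iff $\per(\alpha) \mid r$. Higher lifts encounter obstructions in $H^{2j+1}(M; \Z)$ for $j = 2, 3, 4$; on an orientable $8$-manifold the only potentially nonzero obstructions are
\[o_1 \in H^5(M; \Z), \qquad o_2 \in H^7(M; \Z).\]

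The next step is to make $o_1$ and $o_2$ explicit by identifying the first two relevant $k$-invariants of $BPU_r$, using the known integral cohomology of $BPU_n$ (Vistoli, Vezzosi, and prior work of the second author). These $k$-invariants can be expressed, in terms of the fundamental class $\iota_2 \in H^2(K(\Z/r, 2); \Z/r)$, as combinations of integral Bocksteins and mod-$p$ Steenrod operations; substituting $\wt\alpha$ for $\iota_2$ yields explicit cohomology classes in $\alpha$. A prime-by-prime analysis, leveraging Poincar\'e duality on $M$ and the Wu relations for orientable manifolds, should then show that these obstructions vanish after multiplying $r$ by one additional factor of $\per(\alpha)$ at each successive stage, giving $r = \per(\alpha)^3$ in the generic case.

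The main difficulty I anticipate is the $2$-primary part of $o_2$ when $\per(\alpha) \equiv 2 \pmod 4$. In this case $\wt\alpha$ carries only a single factor of $2$, and the interaction between $\textup{Sq}^2$ and the integral Bockstein appearing in the $k$-invariant $\kappa_2$ produces a $\Z/2$ contribution whose vanishing requires the extra factor of $2$ in $r$. Isolating this as the unique $2$-primary anomaly, and verifying that every other contribution to $o_1$ and $o_2$ is killed after tripling $\per(\alpha)$, is the technical heart of the argument and is what forces the bifurcation in the statement of the theorem.
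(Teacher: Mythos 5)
Your obstruction-theoretic framing is a legitimate dual of what the paper does — by Antieau--Williams, $\ind(\alpha)\mid r$ iff $\alpha$ lifts through $BPU_r \to K(\Z,3)$, and the $k$-invariants you describe correspond (up to normalization) to the differentials $\wt{d}_5$ and $\wt{d}_7$ in the twisted AHSS that the paper works with. The problem is that your sketch misidentifies which prime carries the manifold-specific content, and consequently never invokes the ingredient that actually proves the theorem.

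Note that the paper's pre-existing bound for arbitrary finite $8$-complexes (Theorem \ref{thm:tpip 8-cplex}) is $\ind(\alpha)\mid \epsilon_3(n)n^3$ when $4\mid n$ and $\ind(\alpha)\mid \epsilon_2(n)\epsilon_3(n)n^3$ otherwise. Comparing with Theorem \ref{thm:main}, the improvement obtained for closed orientable $8$-manifolds is precisely the removal of the factor $\epsilon_3(n)$; the $\epsilon_2$ factor (your ``extra factor of $2$ when $\per(\alpha)\equiv 2\pmod 4$'') is already present in the universal $8$-complex result and is simply carried over, not proved afresh. So the bifurcation you flag as ``the technical heart of the argument'' is already known and requires no manifold input, while the $3$-primary improvement — which is the genuine content of Theorem \ref{thm:main} — is absent from your plan. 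As written, your strategy would reprove Theorem \ref{thm:tpip 8-cplex} for $M$, not Theorem \ref{thm:main}.

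To close the gap you need the manifold structure at the prime $3$. In the paper this is Proposition \ref{pro:d3}: the class $nR_n(\xi)\in H^7(M)$ lies in $\opn{im}(\wt{d}_3^{4,-4})$, so the order of $\wt{d}_7^{0,0}(\epsilon_2(n)n^2)$ drops by a factor of $\epsilon_3(n)$. The proof uses (i) the mod-$3$ Wu class identity $v_1 = \rho_3(p_1)$ for oriented $8$-manifolds (Proposition \ref{pro:mod 3 Wu classes}), (ii) the perfect linking pairing $b_M$, and (iii) the cubic-form algebra of \S\ref{ss:cubic} showing that the characteristic element $\gamma(\lambda_x^V)$ of the associated trilinear form on $TH^2(M)\otimes\Z/3$ lies in the image of the second adjoint $\wh\lambda_x^2$. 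Together these produce $z_1,z_2\in H^2(M)$ with $\beta^{\Z/3}(x^3) = \beta^{\Z/3}(x)(p_1 - z_1 z_2)$ (Lemma \ref{lem:Mod 3 relation}), which, combined with $\opn{Sq}^3_{\Z}(p_1 - z_1z_2)=0$ and the module formula $\wt{d}_3(y)=\opn{Sq}^3_{\Z}(y)\pm y\alpha$, yields the desired membership in $\opn{im}(\wt{d}_3)$. Your appeal to ``Poincar\'e duality and Wu relations'' gestures in the right direction, but without the specific identity $v_1 = \rho_3(p_1)$, the cubic linking form, and the surjectivity of its second adjoint on the nondegenerate part, there is no mechanism in your argument that makes the $3$-primary obstruction vanish. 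The $\opn{Sq}^2$-plus-Bockstein interaction you worry about at $p=2$ is a red herring here — it is exactly the part the universal computation already handles.
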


Classes $\alpha$ in the Brauer group of a space $X$ give rise to a notion of $\alpha$-twisted $K$-theory on $X$, see \cite{PMIHES_1970__38__5_0} and \cite{atiyah2005twisted}, and this twisted $K$-theory is computed by a twisted Atiyah-Hirzebruch spectral sequence, see \cite{atiyah2005twisted}. In \cite{antieau2014period}, Antieau and Williams show that (for a connected compact space $X$) the index of the class $\alpha$ can be computed as the index of the subgroup of permanent cycles for the $\alpha$-twisted spectral sequence in $H^0(X) = \mathbb{Z}$.
It follows that we may decompose $\opn{ind}(\alpha)$ as the product of orders of $G_i(\alpha)$ where $G_i$'s are certain (higher) cohomology operations obtained from the differentials of the spectral sequence, with $G_0$ the identity. Thus we obtain the ``period vector'' of $\alpha$
\[\perv(\alpha):=\bigl( \opn{ord}(G_0(\alpha)),\ \opn{ord}(G_1(\alpha)),\ \cdots,\ \opn{ord}(G_{m-2}(\alpha)) \bigr)\]
where $2m$ is the dimension of the ambient CW complex $X$. In other words, we decompose $\opn{ind}(\alpha)$ as the product of all entries of $\perv(\alpha)$. A more detailed discussion on period vectors will be presented in Section \ref{sec:prelim}. The proof of Theorem \ref{thm:main} proceeds via a careful study of these higher cohomology operations. In the universal case (where the space $X$ is a (skeleton of) an appropriate Eilenberg-MacLane space) the operations $G_1$ and $G_2$ are described in Section \ref{sec:prelim}, recalling results from \cite{antieau2013topological} and \cite{gu2017topological}.

Further consideration leads to additional restrictions on the entries of the period vectors which are not visible from previous results such as Theorem \ref{thm:tpip 8-cplex}. For instance, we have the following

\begin{theorem}\label{thm:not(2,2,4)}
Let $X$ be a finite connected CW complex with $\alpha\in TH^3(X)$ of period $2$.
Furthermore, suppose $\opn{ord}(G_1(\alpha))=2$. Then we have $\opn{ord}(G_2(\alpha))|2$.
\end{theorem}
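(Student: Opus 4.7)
The plan is to exploit the description of the higher differentials in the $\alpha$-twisted Atiyah--Hirzebruch spectral sequence (AHSS) as secondary and tertiary cohomology operations, following the framework developed in \cite{antieau2013topological} and \cite{gu2017topological}. Under the hypotheses we have $d_3(1) = \alpha$ and so $E_5^{0,0} = 2\mathbb{Z}$; the hypothesis $\opn{ord}(G_1(\alpha)) = 2$ says that $d_5(2) = G_1(\alpha)$ is nonzero with $2\,G_1(\alpha) = 0$, so $E_7^{0,0} = 4\mathbb{Z}$. The goal is then to show that the tertiary differential $d_7(4) = G_2(\alpha)$ satisfies $2\,G_2(\alpha) = d_7(8) = 0$ in $E_7^{7,-6}$.

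The key step is to identify $d_7(4)$, modulo the image of $d_5$, with a triple Massey product (or Toda bracket) of the form $\langle 2,\, 2,\, G_1(\alpha) \rangle$, arising from the two chained relations $2\alpha = 0$ and $2\,G_1(\alpha) = 0$ that make $4 \in H^0(X) = \mathbb{Z}$ a permanent cycle through the $E_7$-page. Once this description is in hand, the standard linearity/derivation property of Massey products gives
\[
2 \cdot \langle 2,\, 2,\, G_1(\alpha) \rangle \ \subseteq\ \langle 2,\, 2,\, 2\,G_1(\alpha) \rangle + (\text{indeterminacy}),
\]
and the leading term vanishes because $2\,G_1(\alpha) = 0$. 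A final verification shows that the indeterminacy lies inside the images of $d_3$ and $d_5$, so $2\,G_2(\alpha) = 0$ on the $E_7$-page, as required.

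The main obstacle, I expect, is the rigorous identification of $d_7$ with this Massey product together with a careful accounting of its indeterminacy. For this I would work in the universal case, replacing $X$ by an appropriate skeleton of $K(\mathbb{Z}/2,\, 2)$ with universal class $\alpha_u$ equal to the integral Bockstein of the fundamental class, and extend the explicit calculations of $G_1$ from \cite{antieau2013topological} and of $G_2$ from \cite{gu2017topological} to pin down $d_7$ in that setting; the result for general $X$ then follows by naturality. An alternative, perhaps cleaner, route is to use the cofiber sequence $KU \xrightarrow{\,2\,} KU \to KU/2$ to relate the $\alpha$-twisted AHSS for $KU$ to the corresponding spectral sequence for $KU/2$, where divisibility by $2$ is visible directly at the spectrum level.
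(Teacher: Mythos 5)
The proposal has a genuine gap at the very first reduction step. You propose to verify the claim on ``an appropriate skeleton of $K(\mathbb{Z}/2,2)$ with universal class $\alpha_u$''; but for the universal class $\zeta_2$ on the $8$-skeleton of $K_2$ one has $\opn{ord}(G_1(\zeta_2)) = 4$, not $2$ (its period vector is $(2,4,2)$ by \eqref{eq:perv of 8 skeleton}). So the theorem's hypothesis $\opn{ord}(G_1(\alpha)) = 2$ --- which is strictly stronger than $\per(\alpha) = 2$ --- is \emph{not} satisfied by the universal example, and naturality from $K_2$ tells you nothing beyond $\opn{ord}(G_2(\alpha)) \mid 2$ \emph{given} the inputs it has, which here are wrong. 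The whole point of the theorem is that the extra constraint $\opn{ord}(G_1(\alpha)) = 2$ forces $G_2$ to drop; to see this you must work over a space that enforces that constraint. This is exactly what the paper's auxiliary space $Y$ --- the homotopy fiber of $K_2 \times K(\mathbb{Z},2) \to K(\mathbb{Z},5)$ classifying $2Q_2\times 1 - \zeta_2\times \iota_2$ --- is built to do, and Lemma~\ref{lem:universal Y} is the correct substitute for the naturality step you have in mind.

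Beyond that, the identification $G_2(\alpha) \doteq \langle 2, 2, G_1(\alpha)\rangle$ is asserted, not derived, and the differential $\wt{d}_7^{0,0}$ is not simply such a bracket: the twisted AHSS is a module over the untwisted one, and the precise form of $\wt{d}_7$ on $K_n$ (Theorem~\ref{thm:tpip 8-cplex'}) involves $R_n$ together with multiplicative correction terms. In the paper these are handled by exhibiting an explicit integral class $\nu\in H^4(Y)$ with $\opn{Sq}^3_{\mathbb{Z}}\rho_2(\nu) = 0$ (Lemma~\ref{lem:cohomology of Y}) and then showing $R_2(\xi_Y)$ is literally a $\wt{d}_3$-boundary plus $2\wt{d}_5(\cdot)$, i.e.\ dies on $\wt{E}_7$. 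Your Massey-product linearity step and indeterminacy control are plausible heuristics, but without a correct universal model they do not constitute a proof, and the bracket identification itself would need to be established before the linearity argument could even be invoked.
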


In particular, the theorem implies that for a finite connected $8$-complex $X$ and a Brauer class $\alpha\in TH^3(X)$ with $\per(\alpha)=2$, it is not possible to have $\perv(\alpha)=(2,2,4)$, although
such a period vector would not violate Theorem
\ref{thm:tpip 8-cplex}, the main result on the TPIC for $8$-complexes.

As a partial complement of Theorem \ref{thm:main}, we have the following consequence of Theorem \ref{thm:not(2,2,4)}:

\begin{theorem}\label{cor:not(2,2,4)}
Let $X$ be a finite connected CW-complex and let $\alpha\in TH^3(X)$ be of period $2\pmod{4}$, such that $4\nmid\opn{ord}(G_1(\alpha))$. Then we have
\begin{equation*}
\begin{cases}
\opn{ind}(\alpha)|3\opn{per}(\alpha)^3,\quad 3|\opn{per}(\alpha),\\
\opn{ind}(\alpha)|\opn{per}(\alpha)^3,\quad 3\nmid\opn{per}(\alpha).
\end{cases}
\end{equation*}
In particular, if in addition we have $3\nmid n$, then the TPIC holds for the pair $(X,\alpha)$.
\end{theorem}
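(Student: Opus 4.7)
The plan is to work one prime at a time. Writing $\per(\alpha)=2n$ with $n$ odd, I would use the primary decomposition of $TH^3(X)$ to split $\alpha=\alpha_{(2)}+\alpha_{\mathrm{odd}}$, where $\per(\alpha_{(2)})=2$ and $\per(\alpha_{\mathrm{odd}})=n$. Since each $G_i$ comes from a differential in the twisted Atiyah--Hirzebruch spectral sequence for $K$-theory, and $K$-theory splits $p$-locally, one has $G_i(\alpha)_{(p)}=G_i(\alpha_{(p)})$; consequently $\ind(\alpha)=\prod_p \ind(\alpha_{(p)})$, which reduces the problem to a prime-by-prime bound.

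For the $2$-primary part, the hypothesis $4\nmid\ord(G_1(\alpha))$ translates to $\ord(G_1(\alpha_{(2)}))\in\{1,2\}$. If $\ord(G_1(\alpha_{(2)}))=2$, I would apply Theorem \ref{thm:not(2,2,4)} directly to $\alpha_{(2)}$ to conclude $\ord(G_2(\alpha_{(2)}))\mid 2$, so $\ind(\alpha_{(2)})\mid 2\cdot 2\cdot 2 = 8$. If $\ord(G_1(\alpha_{(2)}))=1$, I would fall back on the $2$-primary description of $G_2$ for period-$2$ classes recalled in Section \ref{sec:prelim}, which yields $\ord(G_2(\alpha_{(2)}))\mid 4$, and again $\ind(\alpha_{(2)})\mid 8=\per(\alpha_{(2)})^3$.

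For the odd-primary part, I plan to invoke the $p$-local description of $G_1$ and $G_2$ on a $3$-dimensional class from the preliminaries: at primes $p\geq 5$ both operations vanish in the range under consideration, and at $p=3$ only a single factor of $3$ can arise from a $\mathcal{P}^1$-type operation, present exactly when $3\mid\per(\alpha_{(3)})$. Hence $\ind(\alpha_{\mathrm{odd}})\mid n^3$ when $3\nmid n$, and $\ind(\alpha_{\mathrm{odd}})\mid 3n^3$ otherwise. Combining this with the $2$-primary bound and the identity $\per(\alpha)^3=8n^3$ produces the two displayed conclusions. The ``in particular'' statement is then immediate, since $3\nmid\per(\alpha)$ places us in the second case, where $\ind(\alpha)\mid\per(\alpha)^3$ is exactly the TPIC in dimension $8$. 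The main technical obstacle I foresee is the sub-case $\ord(G_1(\alpha_{(2)}))=1$: although the desired bound $\ord(G_2(\alpha_{(2)}))\mid 4$ is weaker than that afforded by Theorem \ref{thm:not(2,2,4)}, it lies outside its hypothesis and requires its own universal analysis at the prime $2$.
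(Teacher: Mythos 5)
Your overall strategy---reduce to the primary components of $\alpha$ via \cite[Theorem 1.3]{antieau2017prime} and bound each $\ind(\alpha_{(p)})$ separately---is essentially the paper's approach: for the odd part one quotes Theorem~\ref{thm:tpip 8-cplex} (the paper's sentence says ``Theorem~\ref{thm:main}'', which must be a slip since $X$ is only a CW complex), and for the $2$-primary part one invokes Theorem~\ref{thm:not(2,2,4)}.

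The gap is in your treatment of the sub-case $\ord(G_1(\alpha_{(2)}))=1$. You claim that ``the $2$-primary description of $G_2$ for period-$2$ classes recalled in Section~\ref{sec:prelim}'' gives $\ord(G_2(\alpha_{(2)}))\mid 4$, but no such statement is available there. What Section~\ref{sec:prelim} provides is the universal period vector $\perv(\zeta_2)=(2,4,2)$, and by naturality of the twisted AHSS this yields only $\ind(\alpha_{(2)})\mid 16$; when $\ord(G_1(\alpha_{(2)}))=1$ this gives $\ord(G_2(\alpha_{(2)}))\mid 8$, not $\mid 4$. Moreover the higher differentials are not individually natural in the order-by-order sense you would need: only the stable kernels $a_r'\mathbb{Z}$ compare, and the universal case has $a_1'(K_2)=8$ while your hypothetical $X$ has $a_1'(X)=2$, so $\wt d_7^{0,0}(2)$ for $X$ is simply not constrained by the universal computation of $\wt d_7^{0,0}(8)$ on $K_2$. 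So as written this branch of the argument does not close.

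The fix is not to look for a different bound on $G_2$ but to observe that Theorem~\ref{thm:not(2,2,4)} (and the underlying Lemma~\ref{lem:universal Y}) holds under the hypothesis $\ord(G_1(\alpha))\mid 2$, not only $\ord(G_1(\alpha))=2$. The obstruction to lifting the classifying map $X\to K_2\times K(\mathbb{Z},2)$ to the fibre $Y$ is the vanishing of $2Q_2(\xi)-\alpha u \in H^5(X)$ for a suitable $u\in H^2(X)$, and this is exactly the condition $2\beta^{\mathbb{Z}/4}(P_2(\xi))\in\alpha H^2(X)$, i.e.\ $2G_1(\alpha)=0$. That condition is satisfied whenever $\ord(G_1(\alpha))\in\{1,2\}$. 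With that observation the case $\ord(G_1(\alpha_{(2)}))=1$ yields $\ord(G_2(\alpha_{(2)}))\mid 2$ and hence $\ind(\alpha_{(2)})\mid 4\mid 8$, and the rest of your argument goes through.
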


Indeed, it follows from Theorem 1.3 of \cite{antieau2017prime} that it suffices to consider the 
cases $\per(\alpha)=2$ and that $\per(\alpha)$ is odd, for which we apply Theorem \ref{thm:not(2,2,4)} and Theorem \ref{thm:main}, respectively. Taken together, Theorems \ref{thm:main} and \ref{cor:not(2,2,4)} imply that the TPIC will hold for Brauer classes on closed orientable $8$-manifolds, except possibly for classes with 
$2$-primary period vector $\perv(\alpha)=(2,4,2)$ and here we state our final main result.

\begin{theorem} \label{thm:egs}
\hfill
\begin{enumerate}
\item
There is a closed orientable non-spin$^c$ $8$-manifold $M$ with $\alpha \in TH^3(M)$ such that 
$\perv(\alpha) = (2, 4, 2)$.
\item
There is a closed parallelisable (hence almost complex) $8$-manifold $M$ with
$\alpha \in TH^3(M)$ such that 
$\perv(\alpha) = (2, 4, 1)$.
\end{enumerate}
\end{theorem}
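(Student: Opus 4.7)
My plan is to construct the two manifolds separately. In both cases I would first arrange the desired period vector on a suitable 8-dimensional CW complex and then realise it on a closed 8-manifold with the requisite tangential structure (parallelisable in part (2), merely orientable in part (1)). Throughout, I work 2-locally and exploit the explicit description of the operations $G_1$ and $G_2$ on a period-2 Brauer class recalled in Section~\ref{sec:prelim}: these are secondary and tertiary cohomology operations built from the Bockstein $\beta$, Steenrod squares $Sq^2$ and $Sq^3$, and cup products with $\alpha$, with indeterminacy controlled by the previous operations.

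For part (2), I would construct $M$ as a product or twisted product of parallelisable building blocks. Since a parallelisable 8-manifold has all Stiefel--Whitney and Pontryagin classes vanishing, the candidates are essentially Lie-group-like constructions. A natural starting point is a manifold of the form $N^5 \times S^3$, where $N^5$ is a closed orientable parallelisable 5-manifold obtained from $S^5$ by surgery on a suitable framed link so as to introduce a $\Z/2$-summand in $H^2$, hence by Poincar\'e duality in $H^3$; the Brauer class $\alpha$ is pulled back from this 2-torsion class. One then computes via the K\"unneth formula and the formulas for $G_1$ and $G_2$ that $\perv(\alpha) = (2, 4, 1)$, using that the cohomology of $N^5 \times S^3$ in degrees $5$ and $7$ is sufficiently constrained that $G_1$ is nontrivial of order $4$ while $G_2$ is forced to vanish.

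For part (1), to force $\opn{ord}(G_2(\alpha)) = 2$ I would modify the manifold from part (2), for example by a connected sum with a simply-connected non-spin$^c$ 8-manifold such as $\mathbb{HP}^2 \# \overline{\mathbb{HP}^2}$ suitably twisted, or more directly by a surgery that kills enough of the cohomology of $N^5 \times S^3$ while introducing a nontrivial $W_3$. The key point is that allowing $W_3 \neq 0$ simultaneously breaks the $\opn{spin}^c$ condition and creates the cohomological room in the target of $G_2$ for the tertiary operation to pair nontrivially with the fundamental class. The resulting manifold remains closed and orientable, and the higher operation on $\alpha$ can then be computed using naturality and the formulas of Section~\ref{sec:prelim}.

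The principal obstacle, I expect, is verifying the precise orders of the operations on closed manifolds rather than on arbitrary 8-complexes: Poincar\'e duality severely restricts which cohomology classes in the targets of $G_1$ and $G_2$ can appear as values on a given Brauer class. In particular, showing $\opn{ord}(G_1(\alpha)) = 4$ (not merely dividing $4$) on a parallelisable manifold requires an explicit $d_5$-computation in the twisted Atiyah--Hirzebruch spectral sequence and a verification that $G_1(\alpha)$ is not absorbed into the indeterminacy coming from $d_3$. For part~(1), one must further confirm that the surgery introducing $W_3$ genuinely activates $G_2$ at the level of the tertiary operation rather than simply enlarging the indeterminacy of $G_1$; this will likely require tracking the constructions through a low-dimensional Postnikov approximation of the twisted $K$-theory spectrum.
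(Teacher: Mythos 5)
Your overall strategy---build a manifold realising the desired period vector, then arrange the tangential structure---is at the same level of generality as the paper's, but both of your specific constructions differ from the paper's and, in the case of part (2), contain a concrete obstruction.

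For part (2) you propose $M = N^5 \times S^3$ with $\alpha$ pulled back from a $\mathbb{Z}/2$-torsion class in $H^3(N^5)$. This cannot give $\opn{ord}(G_1(\alpha)) = 4$. Since $H^1(S^3) = H^2(S^3) = 0$, the K\"unneth theorem shows that every mod~$2$ lift $\xi$ of $\alpha$ in $H^2(N^5 \times S^3;\mathbb{Z}/2)$ is pulled back from $N^5$, hence so is the Pontrjagin square $P_2(\xi) \in H^4(\,\cdot\,;\mathbb{Z}/4)$, and therefore $\beta^{\mathbb{Z}/4}(P_2(\xi))$ lands in the image of $H^5(N^5) \cong \mathbb{Z}$. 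But a Bockstein always lands in torsion, so $G_1(\alpha) = 0$, not order $4$. The essential point you are missing is that you need a $\mathbb{Z}/4$-summand in $H^5$ which is actually hit by the Pontrjagin square of a period-$2$ class; this is exactly what the paper gets by taking $M_0 := \partial W$, where $W \subset S^9$ is a Wall thickening of a finite $5$-skeleton $K$ of $K_2$ (so that $TH^5(M_0)$ contains $TH_4(K_2) \cong \mathbb{Z}/4$ as a summand, and $G_2 = 1$ automatically because $\alpha$ pulls back from a $5$-complex). Parallelisability is then arranged by connected sums with $S^4\times S^4$ or $S^3\times S^5$, using stable parallelisability of a hypersurface in $S^9$.

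For part (1), connected summing with a simply-connected non-spin$^c$ $8$-manifold would not change $TH^3(M)$ or the Brauer class, and you give no mechanism forcing $\opn{ord}(G_2(\alpha)) = 2$. The paper's construction is genuinely different: it starts from an orientable non-spin$^c$ $6$-manifold $N$ with $\perv(\beta) = (2,4)$ (known to exist by \cite[Theorem 1.4(2)]{crowley2018topological}), forms $M_0 = S^2 \times N$ with $\alpha_0 = \pi_N^*(\beta)$, and performs surgery on the homotopy class $(2,0) \in \pi_2(S^2 \times N)$. The surgery introduces a new $\mathbb{Z}/2$-summand in $TH^3(M)$; the Brauer class $\alpha$ is chosen to have nontrivial component there, and the key computation $R_2(\xi_0) = u\,\beta^{\mathbb{Z}/2}((\xi_0')^2)$ (where $u$ is the $S^2$-class) shows that $G_2(\alpha)$ is nonzero and escapes the indeterminacy. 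Your sketch does not identify any analogous cohomology class on which $G_2$ can be nonzero, nor does it address why introducing $W_3 \ne 0$ would help rather than merely enlarging the indeterminacy; in the paper, the non-spin$^c$ property is inherited directly from $N$ and plays no role in activating $G_2$.
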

\noindent
Part (1) of Theorem \ref{thm:egs} shows that the TPIC fails in general for closed orientable $8$-manifolds.
To the best of our knowledge, the TPIC remains open for spin$^c$ $8$-manifolds
and in particular for the analytic spaces underlying smooth complex varieties
of (complex) dimension $4$.  Part (2) of Theorem \ref{thm:egs} does show that
spin$^c$ $8$-manifolds can have interesting period vectors and that if the TPIC
holds for such manifolds, it is sharp in general.



The rest of this paper is organised as follows:
In Section 2 we develop some necessary preliminaries.
We prove Theorem \ref{thm:main} and in Section 3 and
Theorem \ref{cor:not(2,2,4)} in Section 4.
In Section 5 prove the existence of the examples
in Theorem 5.

\vskip 0.25cm

\noindent
{\bf Acknowledgements:} We would like thank Mark Grant for numerous
helpful comments and conversations.

\section{Preliminaries}
\label{sec:prelim}

\subsection{The topological period-index problem over $8$-complexes} \label{ss:tpip 8-cplex}

In \cite{gu2017topological} and \cite{gu2018topological}, the third author considered the topological period-index problem for finite connected CW-complexes of dimension $8$. Before stating the main result, we introduce the notation, which we use throughout the paper.
\begin{notation*}
Throughout this paper $n$ is a positive integer, 
$K_n$ denotes the Eilenberg-MacLane space $K(\mathbb{Z}/n,2 )$,
$\zeta_n'$ 
the fundamental class of $H^2(K_n;\mathbb{Z}/n)$,  $\beta^{\mathbb{Z}/n}$ the Bockstein homomorphism
\[H^*(-;\mathbb{Z}/n)\rightarrow H^{*+1}(-;\mathbb{Z}),\]
and $\zeta_n :=\beta^{\mathbb{Z}/n}(\zeta'_n)$.
In addition $\epsilon_p(n)$ denotes the greatest common divisor of a prime number $p$ and $n$.
\end{notation*}


\begin{theorem}[Theorem 1.3, \cite{gu2018topological}]\label{thm:tpip 8-cplex}
Let $X$ be a topological space with the homotopy type of a finite connected $8$-dimensional CW-complex, and let $\alpha\in TH^{3}(X)$
be a topological Brauer class of period $n$. Then
\begin{equation}\label{eq:bound}
\begin{cases}
\opn{ind}(\alpha)|\epsilon_{3}(n)n^3,\quad\textrm{if }4|n,\\
\opn{ind}(\alpha)|\epsilon_{2}(n)\epsilon_{3}(n)n^3,\quad\textrm{otherwise.}
\end{cases}
\end{equation}
In addition, if $X$ is the $8$-skeleton of $K_n$ and $\alpha=\zeta_n$, then
\begin{equation*}
\begin{cases}
\opn{ind}(\zeta_n)=\epsilon_{3}(n)n^3,\quad\textrm{if }4|n,\\
\opn{ind}(\zeta_n)=\epsilon_{2}(n)\epsilon_{3}(n)n^3,\quad\textrm{otherwise.}
\end{cases}
\end{equation*}
In particular, the sharp lower bound of $e$ such that $\opn{ind}(\alpha)|n^{e}$ for all $X$ and $\alpha$ is $4$.
\end{theorem}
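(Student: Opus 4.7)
The plan is to analyse the $\alpha$-twisted Atiyah--Hirzebruch spectral sequence $E_r^{p,q}$ converging to $\alpha$-twisted topological $K$-theory. By the Antieau--Williams index formula \cite{antieau2014period}, $\opn{ind}(\alpha)$ is the positive generator of the subgroup $E_\infty^{0,0} \subset E_2^{0,0} = H^0(X;\Z) = \Z$. Since $\dim X \leq 8$, the only differentials potentially nontrivial on $E_r^{0,0}$ are $d_3$, $d_5$, and $d_7$. The $d_3$ differential is (up to sign) cup product with $\alpha$, so $d_3(m) = 0$ iff $n \mid m$ and the first surviving generator is $n$; the differentials $d_5$ and $d_7$ induce on this surviving subgroup, respectively, a secondary and a tertiary stable cohomology operation $G_1$ and $G_2$. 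Consequently the index decomposes as
\[
\opn{ind}(\alpha) \;=\; n \cdot \opn{ord}(G_1(\alpha)) \cdot \opn{ord}(G_2(\alpha)),
\]
so the theorem reduces to bounding $\opn{ord}(G_1(\alpha))$ and $\opn{ord}(G_2(\alpha))$.

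By naturality, every pair $(X, \alpha)$ with $\opn{per}(\alpha) = n$ pulls back, over the $8$-skeleton, from the universal pair $(K_n^{(8)}, \zeta_n)$, so $\opn{ord}(G_i(\alpha))$ divides $\opn{ord}(G_i(\zeta_n))$. The next step is to identify $G_1$ and $G_2$ as named cohomology operations on $K_n$. Building on Antieau--Williams \cite{antieau2013topological}, one can show (following \cite{gu2017topological}) that at the prime $2$, $G_1$ is represented by an integral lift of $\beta \opn{Sq}^2$ applied after mod-$2$ reduction, with analogous descriptions at odd primes; a parallel but more intricate analysis identifies $G_2$. One then computes orders inside $H^*(K_n)$, using the known integral and mod-$p$ cohomology rings of $K(\Z/n,2)$ and the Steenrod action on $\zeta_n'$. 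The outputs are
\[
\opn{ord}(G_1(\zeta_n)) \;=\; \begin{cases} n, & 4 \mid n, \\ \epsilon_2(n)\, n, & 4 \nmid n,\end{cases}
\qquad \opn{ord}(G_2(\zeta_n)) \;=\; \epsilon_3(n)\, n,
\]
and multiplying with $n$ yields both the divisibility formulas for general $\alpha$ and the equalities for the universal class $\zeta_n$.

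Sharpness of the exponent $e = 4$ follows from a test case: for $n = 6$ the computed value $\opn{ind}(\zeta_6) = \epsilon_2(6)\epsilon_3(6)\cdot 6^3 = 6 \cdot 6^3 = 6^4$ does not divide $6^3$, so $e \geq 4$; conversely, since $\epsilon_p(n) \mid n$, the universal bound $\epsilon_2(n)\epsilon_3(n) n^3$ always divides $n^4$, so $e = 4$ always suffices.

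The main obstacle is the identification of the higher differentials $d_5$ and $d_7$ out of $E_r^{0,0}$ as concrete secondary and tertiary stable cohomology operations, and in particular pinning down the $3$-primary tertiary piece of $G_2$. This requires a careful Postnikov-tower analysis of the twisted $K$-theory spectrum together with nontrivial Steenrod-algebra computations in the cohomology of $K(\Z/n,2)$; once the operations are named, the order computations become relatively mechanical. This identification is the technical heart of \cite{gu2017topological, gu2018topological}.
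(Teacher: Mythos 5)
The statement is cited verbatim as Theorem~1.3 of \cite{gu2018topological}; the paper offers no proof of its own, only the downstream reformulation via period vectors in \eqref{eq:perv of 8 skeleton} and the differential computation in Theorem~\ref{thm:tpip 8-cplex'}. Your outline does capture the right framework: reduce via Theorem~\ref{thm:AH diff} to the twisted AHSS differentials $\wt{d}_5^{0,0}$, $\wt{d}_7^{0,0}$; pull back from the universal pair $(K_n^{(8)},\zeta_n)$ by naturality; express $\ind(\alpha) = n\cdot\ord(G_1)\cdot\ord(G_2)$; and your final numerical table for $\ord(G_1(\zeta_n))$ and $\ord(G_2(\zeta_n))$ agrees with \eqref{eq:perv of 8 skeleton}. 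The sharpness argument (take $n=6$, and note $\epsilon_2(n)\epsilon_3(n)\mid n$ always) is also correct.

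However, two points keep this from being a proof rather than a roadmap. First, your identification of $G_1$ at the prime $2$ as ``an integral lift of $\beta\opn{Sq}^2$'' is not right as stated: $\beta^{\Z/2}\opn{Sq}^2(\xi) = \beta^{\Z/2}(\xi^2)$ can only detect an order-$2$ element of $H^5$, whereas $G_1$ must realise the full $\Z/4$ when $\epsilon_2(n)n$ has $4$-torsion. The correct formula, \eqref{eq:G_1}, uses the Bockstein $\beta^{\Z/\epsilon_2(n)n}$ applied to the Pontrjagin square $P_2(\xi)$, which is a genuine quadratic refinement and not recoverable from $\opn{Sq}^2$ alone. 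Second, and more fundamentally, you explicitly defer the identification of $G_2$ as a tertiary operation, its $3$-primary analysis, and the verification of the order computations on $K_n^{(8)}$ to ``the technical heart of \cite{gu2017topological, gu2018topological}.'' That is precisely the content of the theorem, so the proposal is a correct summary of the strategy used in the reference the paper cites, not a self-contained argument. As a supplementary note, the order of $G_1(\zeta_n)$ on the $8$-skeleton differs from the $6$-skeleton value $\epsilon_2(n)n$ precisely when $4\mid n$ (compare \eqref{eq:perv of 6 skeleton} with \eqref{eq:perv of 8 skeleton}); your table reflects this correctly, but an actual proof must account for why the extra cells of the $8$-skeleton reduce $\ord(G_1(\zeta_n))$ from $2n$ to $n$ when $4\mid n$, which is the content of the differential $\wt{d}_3^{5,-4}$ into $H^8(K_n)$ and one of the genuinely new phenomena in passing from dimension~$6$ to~$8$.
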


\subsection{The Atiyah-Hirzebruch Spectral Sequence for twisted $K$-theory} \label{ss:TAHSS}

For a connected topological space $X$ and a class $\alpha\in TH^{3}(X)$, Donovan and Karoubi (\cite{PMIHES_1970__38__5_0}) and also Atiyah and Segal (\cite{atiyah2004twisted}, \cite{atiyah2005twisted}) considered the twisted versions of complex $K$-theory of $X$ with respect to $\alpha$, which we denote by $KU(X)_{\alpha}$, following the convention in \cite{antieau2014period}. This generalizes the usual complex topological $K$-theory $KU(X)$ in the sense that when $\alpha=0$, we have $KU(X)_{\alpha}\cong KU(X)$.

Similarly to untwisted $K$-Theory, there is a twisted Atiyah-Hirzebruch spectral sequence for
twisted $K$-Theory, $\widetilde{E}_{*}^{*,*}$, 
which converges to $KU(X)_{\alpha}$ when $X$ is homotopy equivalent a finite CW complex
and whose $E_2$-terms,
\begin{equation*}
\widetilde{E}_{2}^{s,t}\cong
\begin{cases}
H^{s}(X),\quad\textrm{if $t$ is even,}\\
0, \quad\textrm{if $t$ is odd,}
\end{cases}
\end{equation*}
do not depend on the twist.
For more details, see \cite{antieau2013topological}, \cite{antieau2014period} and \cite{atiyah2005twisted}. The spectral sequence is closely related to the index of $\alpha$, as shown in the following
\begin{theorem}[Theorem 3.1, \cite{gu2017topological}]\label{thm:AH diff}
Let $X$ be a connected finite CW-complex and let $\alpha\in\operatorname{Br}_{top}(X)$. Consider $\widetilde{E}_{*}^{*,*}$, the twisted Atiyah-Hirzebruch spectral sequence (AHSS) with respect to $\alpha$ with differentials $\widetilde{d}_{r}^{s,t}$ with bi-degree $(r, -r+1)$. In particular, $\widetilde{E}_{2}^{0,0}\cong\mathbb{Z}$, and any $\widetilde{E}_{r}^{0,0}$ with $r>2$ is a subgroup of $\widetilde{E}_{2}^{0,0}$ and therefore generated by a positive integer. 
The subgroups $\widetilde{E}_{3}^{0,0}$ and $\widetilde{E}_{\infty}^{0,0}$ are generated respectively 
by $\opn{per}(\alpha)$ and $\opn{ind}(\alpha)$.
\end{theorem}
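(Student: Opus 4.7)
The plan is to prove the two claims about $\widetilde{E}_{3}^{0,0}$ and $\widetilde{E}_{\infty}^{0,0}$ in turn, working entirely with the structure of the twisted Atiyah--Hirzebruch spectral sequence. I will use throughout that $\widetilde{E}_{2}^{s,t}$ vanishes whenever $t$ is odd, which forces $d_{2}\equiv 0$ and hence $\widetilde{E}_{3}=\widetilde{E}_{2}$ at every bidegree.

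\emph{The period claim.} The first potentially nonzero differential leaving $(0,0)$ is $d_{3}:\widetilde{E}_{3}^{0,0}=\mathbb{Z}\to\widetilde{E}_{3}^{3,-2}=H^{3}(X;\mathbb{Z})$. By the Atiyah--Segal description of the twisted AHSS in \cite{atiyah2005twisted}, $d_{3}$ acts on a class $x$ as $\mathrm{Sq}^{3}_{\mathbb{Z}}(x)\pm \alpha\cup x$, where $\mathrm{Sq}^{3}_{\mathbb{Z}}=\beta\circ\mathrm{Sq}^{2}\circ\rho_{2}$. Applied to $1\in H^{0}(X;\mathbb{Z})$ the Steenrod contribution vanishes and we obtain $d_{3}(1)=\pm\alpha$. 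The kernel is therefore $\{n\in\mathbb{Z}:n\alpha=0\}=\per(\alpha)\mathbb{Z}$, and this kernel is exactly the subgroup $\widetilde{E}_{3}^{0,0}\subseteq\widetilde{E}_{2}^{0,0}$ in the convention adopted in the statement (where the label tracks the successive subgroups of $\mathbb{Z}$ surviving to each page after the relevant differential has been applied).

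\emph{The index claim.} Here I would follow Antieau--Williams \cite{antieau2014period}. The $\alpha$-twisted K-group $KU^{0}(X)_{\alpha}$ carries the AHSS filtration, and the associated edge map $KU^{0}(X)_{\alpha}\to \widetilde{E}_{\infty}^{0,0}\subseteq\mathbb{Z}$ sends an $\alpha$-twisted vector bundle to its rank. Two steps complete the identification. First, on a finite CW complex $X$, Azumaya algebras of degree $r$ with Brauer class $\alpha$ correspond via $V\mapsto\mathrm{End}(V)$ to $\alpha$-twisted vector bundles of rank $r$, so the set of degrees of Azumaya algebras representing $\alpha$ coincides with the set of ranks of $\alpha$-twisted vector bundles, whose positive generator is by definition $\ind(\alpha)$. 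Second, one must show that every positive integer appearing in $\widetilde{E}_{\infty}^{0,0}$ is realized as the rank of an honest (not merely virtual) $\alpha$-twisted vector bundle, forcing the image of the edge map to be exactly $\ind(\alpha)\mathbb{Z}$.

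The main obstacle is this last point: passing from a virtual twisted K-theory class of a prescribed rank to a genuine $\alpha$-twisted vector bundle of the same rank. This requires a stabilization argument specific to twisted K-theory, together with the correspondence between $PU_{r}$-bundles and twisted vector bundles on finite CW complexes, and is essentially the content of the Antieau--Williams analysis, which can be invoked directly rather than re-derived.
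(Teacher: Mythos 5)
Your outline identifies the two ingredients that the cited reference \cite{gu2017topological} (and, behind it, Antieau--Williams \cite{antieau2014period}) actually uses: the computation $\wt d_3(1)=\pm\alpha$ via the module structure of the twisted AHSS, giving $\Ker\wt d_3^{0,0}=\per(\alpha)\Z$, and the analysis of the edge homomorphism $KU^0(X)_\alpha\to\wt E_\infty^{0,0}$ identifying its image with $\ind(\alpha)\Z$. Since the present paper offers no proof of its own and simply cites \cite{gu2017topological}, it is reasonable to invoke Antieau--Williams for the realization step rather than rederive it.

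Two small corrections. First, the printed statement is off by one in its indexing, and your parenthetical attempt to reconcile it by reinterpreting the convention does not quite succeed. Since $\wt d_2=0$ (the $E_2$-page vanishes in odd $t$), the standard convention gives $\wt E_3^{0,0}=\wt E_2^{0,0}=\Z$, and it is $\wt E_4^{0,0}=\wt E_5^{0,0}=\Ker\wt d_3^{0,0}$ that equals $\per(\alpha)\Z$; this is the convention the paper itself uses in Section~\ref{sec:main}, where $\wt E_7^{0,0}$ is the \emph{domain} of $\wt d_7^{0,0}$ and $\wt E_8^{0,0}$ its kernel. Under the convention your parenthetical proposes, $\wt E_7^{0,0}$ would instead be $\Ker\wt d_7^{0,0}$, contradicting that later usage; the cleaner reading is simply that the statement should say $\wt E_5^{0,0}$ (or $\wt E_4^{0,0}$). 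Second, for the index claim you need slightly less than ``every positive integer in $\wt E_\infty^{0,0}$ is realized by an honest twisted bundle'': it suffices, and is what Antieau--Williams prove, that all sufficiently large integers in $\wt E_\infty^{0,0}$ are so realized. Adding a trivial twisted bundle of large rank $r\in\ind(\alpha)\Z$ to a virtual class of rank $m$ (the positive generator of $\wt E_\infty^{0,0}$) and realizing the result as an honest bundle shows $m+r\in\ind(\alpha)\Z$, hence $m\in\ind(\alpha)\Z$; the trivial inclusion $\ind(\alpha)\Z\subseteq\wt E_\infty^{0,0}$ supplies the other divisibility.
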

\noindent
We call this spectral sequence the twisted 
AHSS for the pair $(X,\alpha)$.

Theorem \ref{thm:AH diff} is the background result used in the proof of Theorem \ref{thm:tpip 8-cplex}
and shows why understanding the differentials in the twisted AHSS is the key to computing $\ind(\alpha)$.
To begin, we consider the case whe $X$ is the universal space $K_n$.
The cohomology of $K_n$ can be deduced from \cite{Ca}, the standard reference for (co)homology of Eilenberg-MacLane spaces. Alternatively, all material in this section can be found in \cite{antieau2013topological} and \cite{gu2017topological}.

Recall that for any integer $r>1$ we have
\[H^5(K_n)\cong\mathbb{Z}/\epsilon_2(n)n,\]
and
\[H^7(K_n)\cong\mathbb{Z}/\epsilon_3(n)n.\]
Recall also that the canonical generators $Q_n$ of $H^5(K_n)$ and $R_n$ of $H^7(K_n)$ satisfy
\begin{equation}\label{eq:Pontryagin powers}
\begin{split}
\epsilon_2(n)Q_n=\beta^{\mathbb{Z}/n}((\zeta'_n)^2),\\
\epsilon_3(n)R_n=\beta^{\mathbb{Z}/n}((\zeta'_n)^3).
\end{split}
\end{equation}

\begin{theorem}\label{thm:tpip 8-cplex'}
Let $\zeta_n\in TH^3(K_n)$ be the canonical generator of $H^3(K_n)$. Then the twisted Atiyah-Hirzebruch spectral sequence for $(K_n,\zeta_n)$ satisfies
\begin{equation}\label{eq:d5 for K_n}
\widetilde{d}_5^{0,0}(n)=\lambda_1Q_n
\end{equation}
and
\begin{equation}\label{eq:d7 for K_n}
\widetilde{d}_7^{0,0}(\epsilon_2(n)n^2)=
\begin{cases}
\frac{\lambda_2}{2}R_n & 4|n,\\
\lambda_2R_n & \textrm{ otherwise},
\end{cases}
\end{equation}
where $\lambda_1$ and $\lambda_2$ are integers co-prime to $n$.
\end{theorem}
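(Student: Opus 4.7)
The plan is to pin down the two differentials by combining three ingredients: the basic structure of the twisted AHSS for the pair $(K_n,\zeta_n)$, an identification of $\widetilde{d}_5^{0,0}$ and $\widetilde{d}_7^{0,0}$ on a surviving integer multiple of $1\in H^0(K_n)$ as explicit secondary (respectively tertiary) cohomology operations applied to $\zeta_n$, and a consistency check against the value of $\opn{ind}(\zeta_n)$ given by Theorem~\ref{thm:tpip 8-cplex}. First I would record that $H^2(K_n)=H^4(K_n)=0$, so the receiving columns $\widetilde{E}_*^{5,-4}$ and $\widetilde{E}_*^{7,-6}$ lose nothing to lower-filtration differentials and therefore remain equal to $H^5(K_n)\cong\mathbb{Z}/\epsilon_2(n)n$ and $H^7(K_n)\cong\mathbb{Z}/\epsilon_3(n)n$ respectively. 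Since $\widetilde{d}_3^{0,0}(1)=\zeta_n$ (the defining property of the twisted differential) and $\opn{ord}(\zeta_n)=n$, this at once gives $\widetilde{E}_5^{0,0}=n\mathbb{Z}$.

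Next I would interpret $\widetilde{d}_5^{0,0}(n)$ as the secondary cohomology operation arising from the relation $n\cdot\zeta_n=0$, i.e.\ the Massey-type bracket in the twist $\zeta_n$. Drawing on the explicit formulas in \cite{antieau2013topological} and \cite{gu2017topological}, this operation on the universal class is proportional to $\beta^{\mathbb{Z}/n}((\zeta_n')^2)=\epsilon_2(n)Q_n$, so that $\widetilde{d}_5^{0,0}(n)=\lambda_1 Q_n$ for some integer $\lambda_1$. To conclude $\gcd(\lambda_1,n)=1$, I would argue by contradiction: if a prime $p\mid n$ also divided $\lambda_1$, then $\widetilde{d}_5^{0,0}(n)$ would have order strictly less than $\epsilon_2(n)n$ in $H^5(K_n)$, the kernel $\widetilde{E}_7^{0,0}$ would be generated by a proper divisor of $\epsilon_2(n)n^2$, and consequently $\opn{ind}(\zeta_n)$, being the positive generator of $\widetilde{E}_\infty^{0,0}\subseteq\mathbb{Z}$, would be a proper divisor of the sharp value asserted in Theorem~\ref{thm:tpip 8-cplex}, a contradiction.

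The argument for $\widetilde{d}_7^{0,0}$ runs in parallel. From the previous step the kernel of $\widetilde{d}_5^{0,0}$ is generated by $\epsilon_2(n)n^2$, which explains the input on the left-hand side of (\ref{eq:d7 for K_n}). Interpreting $\widetilde{d}_7^{0,0}(\epsilon_2(n)n^2)$ as a tertiary operation in $\zeta_n$ and again invoking \cite{antieau2013topological,gu2017topological}, its value on the universal class is proportional to $\beta^{\mathbb{Z}/n}((\zeta_n')^3)=\epsilon_3(n)R_n$; when $4\mid n$ the input $\epsilon_2(n)n^2=2n^2$ carries an extra factor of $2$ that is absorbed by the operation, producing the factor $\lambda_2/2$, and in all other cases no such division occurs. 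That $\lambda_2$ is coprime to $n$ is again forced by matching against $\opn{ind}(\zeta_n)$ from Theorem~\ref{thm:tpip 8-cplex}, by the same index-consistency argument. The main obstacle throughout is the precise identification of $\widetilde{d}_5^{0,0}$ and $\widetilde{d}_7^{0,0}$ as specific Massey-type operations and, in particular, keeping track of the $1/2$ phenomenon when $4\mid n$; here I would rely heavily on the detailed computations already carried out in \cite{antieau2013topological,gu2017topological} rather than re-deriving them from first principles.
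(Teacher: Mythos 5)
Your overall strategy is the same as the paper's: quote the known secondary operation for $\widetilde{d}_5^{0,0}$ (the paper cites Theorem B of \cite{antieau2013topological}), and then pin down $\widetilde{d}_7^{0,0}$ (and the coprimality of the unknown scalars) by matching against the sharp value of $\opn{ind}(\zeta_n)$ given in Theorem~\ref{thm:tpip 8-cplex}, using Theorem~\ref{thm:AH diff} to translate between the index and $\widetilde{E}_\infty^{0,0}$. This is exactly how the paper argues, and your "index-consistency" step is sound: since $H^5(K_n)$ and $H^7(K_n)$ are cyclic with generators $Q_n$ and $R_n$, knowing the orders of the differentials determines them up to a unit.

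However, your identification of the secondary operation is wrong, and in a way your own consistency check should have caught. You assert that $\widetilde{d}_5^{0,0}(n)$ is "proportional to $\beta^{\Z/n}((\zeta_n')^2) = \epsilon_2(n)Q_n$, so that $\widetilde{d}_5^{0,0}(n) = \lambda_1 Q_n$." When $n$ is even this is a non-sequitur: $\epsilon_2(n)Q_n = 2Q_n$ has order $n$ in $H^5(K_n) \cong \Z/2n$, while $\lambda_1 Q_n$ with $\lambda_1$ coprime to $n$ (hence odd) has order $2n$. The two are not proportional by a unit. The correct operation, as recorded in \eqref{eq:G_1}, is $\beta^{\Z/\epsilon_2(n)n}(P_2(\xi))$ with $P_2$ the Pontrjagin square; the \emph{longer} Bockstein applied to $P_2(\zeta_n')$ hits $Q_n$ itself (up to a unit), not $\epsilon_2(n)Q_n$. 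Indeed, if you plug the formula $c\,\epsilon_2(n)Q_n$ into your consistency check for even $n$, the kernel of $\widetilde{d}_5^{0,0}$ would be too large and the resulting index would contradict Theorem~\ref{thm:tpip 8-cplex} for \emph{every} choice of $c$ -- a sign that the premise, not merely the scalar, needs fixing. The analogous imprecision recurs in the $d_7$ step: the relevant operation uses the Pontrjagin cube (not the ordinary cube), and the $1/2$ factor for $4\mid n$ is a structural feature of that operation rather than something "absorbed" from the input $2n^2$; you rightly flag this as the hardest point and defer to \cite{antieau2013topological,gu2017topological}, but as written the explanation is not correct.

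In short: the skeleton of the argument is right and matches the paper's two-line proof, but the explicit description of the operations is incorrect for $n$ even (cup square versus Pontrjagin square, short versus long Bockstein), and the derivation of the form $\lambda_1 Q_n$ from your stated premise does not go through.
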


\begin{proof}
Equation (\ref{eq:d7 for K_n}) is Theorem B of \cite{antieau2013topological}. One then readily deduces (\ref{eq:d7 for K_n}) from Theorem \ref{thm:tpip 8-cplex} and Theorem \ref{thm:AH diff}.
\end{proof}

\subsection{Period Vectors}
As mentioned in the introduction, the finiteness of $X$ indicates that we may obtain $\opn{ind}(\alpha)$ by computing the differentials $\widetilde{d}_{r}^{0,0}$ for successive $r$'s. By doing so we obtain a stable chain of subgroups of $\mathbb{Z}$
\begin{equation*}
\widetilde{E}_2^{0,0}=\Z\supset\opn{per}(\alpha)\Z=\opn{Ker}\widetilde{d}_{3}^{0,0}
\supseteq\opn{Ker}\widetilde{d}_{5}^{0,0}\supseteq\cdots\supseteq\opn{Ker}\widetilde{d}_{2r+1}^{0,0}\supseteq\cdots.
\end{equation*}
Notice that, by Bott periodicity, the differentials $\widetilde{d}_r^{0,0}$ for even $r$ are trivial. If we let $a_r'$ ($r\geq 0$) be the unique positive integer generating $\opn{Ker}\widetilde{d}_{2r+3}^{0,0}$, then, in particular, we have $a_0'=\opn{per}(\alpha)$. We also fix the notation $a'_{-1}=1$. Consequently, we have $a_r'|a_{r+1}'$. The element  $\widetilde{d}_{2r+3}^{0,0}(a_{r-1}')$ is then a torsion element of order $a_r:=a_r'/a_{r-1}'$. Therefore, if $X$ is of even dimension $2m$, then we have
\begin{equation}\label{eq:a_r}
\opn{ind}(\alpha)=a_0a_1\cdots a_{m-2}.
\end{equation}

It is both conceptually illuminating and practically helpful to consider the higher cohomology 
operations $G_r$ defined on $TH^3(X)$ by
\begin{equation}\label{eq:Gr definition}
G_r(\alpha)=\widetilde{d}_{0,0}^{2r+3}(\perv(a_{r}')).
\end{equation}
By definition, $G_0$ is the identity on $TH^3(X)$.
Antieau and Williams \cite{antieau2013topological} determined 
the secondary operation $G_1$ (which they denote by $G$).
Specifically, if $\per(\alpha) = n$
and $\alpha = \beta^{\Z/n}(\xi)$ for $\xi \in H^2(X; \Z/n)$, then 
\cite[Theorem 5.2]{antieau2013topological} states that
\begin{equation} \label{eq:G_1}
G_1(\alpha) = [\lambda \beta^{\Z/\epsilon_2(n)n}(P_2(\xi))] \in H^5(X)/\alpha H^2(X),
\end{equation}
where $P_2(\xi) = \xi^2$ if $n$ is odd, $P_2(\xi)$
is the Pontrjagin square of $\xi$ if $n$ is even and $\lambda$ is an integer prime to $n$.
In general the computation of $G_2(\alpha)$ is more difficult that
the computation of $G_1(\alpha)$.  This because 
the definition of $G_2(\alpha)$ depends on the value of $G_1(\alpha)$
and because $G_2(\alpha)$ lies in a more complicated quotient than $G_1(\alpha)$.
However, for $\alpha \in H^3(K_n)$ the canonical generator, 
Theorem \ref{thm:tpip 8-cplex'} computes $G_2(\alpha)$.

The element $G_r(\alpha)$ is defined so that equation (\ref{eq:a_r}) becomes
\begin{equation*}
\opn{ind}(\alpha)=
\opn{ord}(G_0(\alpha))\opn{ord}(G_1(\alpha))\cdots\opn{ord}(G_{m-2}(\alpha)),
\end{equation*}
where $\opn{ord}(G_r(\alpha))$ is the order of $G_r(\alpha)$. Furthermore, we define the \emph{period vector} of $\alpha$, denoted $\perv(\alpha)$, to be the ordered sequence of non-negative integers
\[\perv(\alpha):= 
\bigl( \opn{ord}(G_0(\alpha)),\ \opn{ord}(G_1(\alpha)),\ \cdots,\ \opn{ord}(G_{m-2}(\alpha)) \bigr)\]
and refer to $\opn{ord}(G_r(\alpha))$ \emph{the $r$th period} of $\alpha$.

In terms of the period vector, what Antieau and Williams show is essentially the following: Let $X$ be the $6$th skeleton of $K_n$ and $\alpha$ be the pull-back of $\zeta_n\in H^3(K_n)$, then we have
\begin{equation}\label{eq:perv of 6 skeleton}
\perv(\alpha)=(n,\epsilon_2(n)n).
\end{equation}
Similarly, what the third author shows in \cite{gu2017topological} and \cite{gu2018topological} is essentially the following: Let $X$ be the $8$th skeleton of $K_n$ and $\alpha$ be the pull-back of $\zeta_n\in H^3(K_n)$, then we have
\begin{equation}\label{eq:perv of 8 skeleton}
\perv(\alpha)=
\begin{cases}
(n,2n,\epsilon_3(n)n), \quad n\equiv 2\pmod{4},\\
(n,n,\epsilon_3(n)n), \textrm{ otherwise.}
\end{cases}
\end{equation}
It follows from Theorem \ref{thm:AH diff} and the functoriality
of the twisted $K$-theory AHSS, that for a connected finite CW complex $X$ of dimension $2d$, and a class $\alpha\in TH^3(X)$ with period vector
\[\perv(\alpha):= \bigl( n,\ \opn{ord}(G_1(\alpha)),\ \cdots,\ \opn{ord}(G_{d-2}(\alpha)) \bigr),\]
we have
\begin{equation}\label{eq:divisibility}
\opn{ind}(\alpha)=
n\opn{ord}(G_1(\alpha))\cdots\opn{ord}(G_{d-2}(\alpha))|
n \opn{ord}(G_1(\zeta_n))\cdots\opn{ord}(G_{d-2}(\zeta_n)),
\end{equation}
where $\zeta_n$ is the canonical generator of $H^3(K_n)$. However, in general, a period vector needs to satisfy  more than (\ref{eq:divisibility}), as illustrated in Theorem \ref{thm:not(2,2,4)}, which we prove in Section \ref{sec:main}.

\subsection{The linking pairing of a manifold}\label{ss:linking}

In this subsection we recall the linking pairing of a closed, oriented, connected $m$-manifold $M$.
For $m=8$ and $x \in H^2(M; \Z/3)$ we use the linking pairing of $M$ to define a symmetric
cubic form on $V : = TH^2(M; \Z) \otimes \Z/3$.
The material in this subsection is a cubic version of parts of \cite[\S 3]{crowley2018topological}.


Recall that for a finite abelian group $G$, its torsion dual is defined as
\[G^{\wedge}:=\opn{Hom}(G,\mathbb{Q}/\mathbb{Z}).\]
If $H$ is another finite abelian group, then then a bilinear pairing
\[\lambda:G\times H\rightarrow\mathbb{Q}/\mathbb{Z}\]
gives rise to the adjoint homomorphisms
\[\widehat{\lambda}_l:G\rightarrow H^{\wedge},\quad g\mapsto\lambda(g,-)\textrm{ and }\ \widehat{\lambda}_r:H\rightarrow G^{\wedge}, \quad h\mapsto\lambda(-,h).\]

Given a topological space $X$, let
\[\beta^{\mathbb{Q}/\mathbb{Z}}: H^*(X;\mathbb{Q}/\mathbb{Z})\rightarrow TH^{*+1}(X;\mathbb{Z})\]
be the Bockstein homomorphism.
%
For $M$ as above, the linking pairing of $M$ is the bilinear map
$$ b^k_M \colon TH^k(M) \times TH^{m-k+1}(M) \to \Q/\Z,
\quad (x, y) \mapsto \an{\wt x y, [M]},$$
where $\wt x \in H^{k-1}(M; \Q/\Z)$ is such that $\beta^{\Q/\Z}(\wt x) = x$.
It is well-known that $b_M$ is {\em perfect},
meaning that $(\wh b^k_M)_l$ and $(\wh b^k_M)_r$ are both
isomorphisms; see \cite[\S 3]{crowley2018topological}.
For any positive integer $l$, the commutative diagram of short exact coefficient sequences
\begin{equation*}
 \xymatrix{
\Z \ar[d]_{=} \ar[r]^{\times l} &
\Z \ar[d]^{\times\frac{1}{l}} \ar[r] &
\Z/l \ar[d]^{\iota_l} \\
\Z \ar[r] &
\mathbb{Q} \ar[r] &
\mathbb{Q}/\Z, }
\end{equation*}
where $\iota_l:\mathbb{Z}/l \hookrightarrow\mathbb{Q}/\mathbb{Z}, 1\mapsto [\frac{1}{l}]$,
gives that for all classes $x \in H^{k-1}(M; \Z/l)$ and $y \in TH^{m-k+1}(M)$ we have
\begin{equation} \label{eq:beta}
 b^k_M(\beta^{\Z/l}(x), y) = \iota_l(\an{xy, [M]}).
\end{equation}

Assume now that $M$ is an $8$-manifold.
For $x \in H^2(M; \Z/3)$ we define the following trilinear homomorphism
\[\lambda_x: TH^2(M)\times TH^2(M)\times TH^2(M)\rightarrow\mathbb{Q}/\mathbb{Z},
\quad (z_1,z_2,z_3)\mapsto \iota_3(\an{xz_1z_2z_3, [M]}).\]
By \eqref{eq:beta}
\[ \lambda_x(z_1, z_2, z_3) = b_M^3(\beta^{\Z/3}(x), z_1z_2z_3) .\]
For $V:=TH^2(M)\otimes\mathbb{Z}/3$ the symmetric trilinear homomorphism $\lambda_x$
induces the map
\[\lambda^V_x: V\times V\times V \rightarrow\mathbb{Z}/3,
\quad ([z_1], [z_2], [z_3]) \mapsto \iota_3^{-1}(\lambda_x(z_1, z_2, z_3),\]
which is a symmetric trilinear function of $(\mathbb{Z}/3)$-vector spaces.
In the next subsection, we will make some elementary remarks about symmetric
trilinear forms such as $\lambda_x^V$.
%
%

\subsection{Some elementary cubic algebra} \label{ss:cubic}
In this subsection we establish elementary algebraic facts about symmetric trilinear forms.
This generalises analogous material from \cite{crowley2018topological} on symmetric bilinear forms.

Let $\F_3 = \Z/3$ be the field with three elements.
A symmetric trilinear form on a finite dimensional $\F_3$ vector space $V$ is a trilinear map
$$ \lambda \colon V \times V \times V \to \F_3 $$
such that for any permutation $\sigma \in \Sigma_3$ we have
$$ \lambda(v_1, v_2, v_3) = \lambda(v_{\sigma(1)}, v_{\sigma(2)}, v_{\sigma(3)}).$$
Given trilinear forms $(V, \lambda_1)$ and $(W, \lambda_2)$, their orthogonal sum
is the trilinear form
$$ \lambda \colon (V \oplus W) \times (V  \oplus W) \times (V \oplus W) \to \F_3$$
define by
$$ (\lambda_1 \oplus \lambda_2) \bigl( (v_1, v_2, v_3), (w_1, w_2, w_3) \bigr) =
\lambda_1(v_1, v_2, v_3) + \lambda_2(w_1, w_2, w_3).$$

Let $V^* = \mathrm{Hom}(V, \F_3)$ denote the dual of $V$.
We consider two adjoint maps associated to a symmetric trilinear form $(V, \lambda)$.
The first of these is the homomorphism
$$ \wh \lambda^1 \colon V \to V^* \times V^*,
\quad u \mapsto ((v, w) \mapsto \lambda(u, v, w)).$$
We call the kernel of $\lambda_1$  the {\em radical} of $\lambda$ and denote it by $R$, since it
is the subspace of $V$ which combines trivially with every pair in $V \times V$.
Writing $V = R \oplus V/R$, we notice that
$\lambda$ induces a symmetric trilinear form $\bar \lambda$ on $V/R$ and that
$\lambda$ is isomorphic to the orthogonal sum of the zero form on $R$
and $\lambda_{R}$:
\begin{equation} \label{eq:trilinear_sum}
 \lambda \cong (R,0) \oplus (V/R, \bar \lambda)
\end{equation}
If $\lambda^1$ is injective, we call $\lambda$ {\em nondegenerate}.
In particular, $\bar \lambda$ is nondegenerate.

The second adjoint of $(V, \lambda)$ is the homomorphism
$$ \widehat{\lambda}^2\colon V \times V \to V^*,
\quad (u, v) \mapsto (w \mapsto \lambda(u, v, w)).$$
Notice that the dual of $\widehat{\lambda}^2$ is precisely $\widehat{\lambda}_1$:
$(\widehat{\lambda}^2)^* = \widehat{\lambda}^1$.
It follows that $\widehat{\lambda}^2$ is onto if and only if $\widehat{\lambda}^1$ is injective;
i.e.~$\lambda$ is nondegenerate.

We define the {\em characteristic element} of $(V, \lambda)$ to be the map
\[ \gamma(\lambda) \colon V \to \F_3,
\quad v \mapsto \lambda(v, v, v).\]
The symmetry of $\lambda$ ensures that $\gamma(\lambda)$ is linear;
i.e.\ we have the following equations: For all $v, w \in V$
\begin{eqnarray*}
\gamma(\lambda)(v+w) & = & \lambda(v+w, v+w, v+w) \\
& = & \lambda(v, v, v) + \lambda(v, v, w) + \lambda(v, w, v) + \lambda(w, v, v) + \\
& & \lambda(w, w, v) + \lambda(w, v, w) + \lambda(v, w, w) + \lambda(w, w, w)
\\
& = & \lambda(v, v, v) + 3\lambda(v, v, w) + 3\lambda(w, w, v) + \lambda(w, w, w) \\
& = & \lambda(v, v, v) + \lambda(w, w, w) \\
& = & \gamma(\lambda)(v)+\gamma(\lambda)(w),
\end{eqnarray*}
and for all $a \in \F_3 $
\begin{equation*}
\gamma(\lambda)(av)=\lambda(a v, a v, av) =
a^3 \lambda(v, v, v)=a\lambda(v, v, v)=a\gamma(\lambda)(v).
\end{equation*}
Hence $\gamma(\lambda) \in V^*$.
Moreover, it is clear that
the characteristic element of 
the orthogonal sum of trilinear forms, $(V_1, \lambda_1)$ and $(V_2, \lambda_2)$ is given by
\begin{equation} \label{eq:char_elt_sum}
\gamma(\lambda_1 \oplus \lambda_2) = (\gamma(\lambda_1), \gamma(\lambda_2)) \in V_1^* \oplus V_2^*
\end{equation}
The following lemma relates $\gamma(\lambda)$ to the second adjoint of $(V, \lambda)$.

\begin{lemma} \label{lem:diag3}
For all trilinear forms $(V, \lambda)$, we have $\gamma(\lambda) \in \opn{im}(\widehat{\lambda}^2)$.
\end{lemma}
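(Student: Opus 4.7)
The plan is to reduce to the nondegenerate case via the orthogonal decomposition recorded in equation \eqref{eq:trilinear_sum}, and then invoke the surjectivity statement for $\widehat{\lambda}^2$ already observed in the text.

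First I would handle the nondegenerate case, which is essentially free. The paper has already noted that $(\widehat{\lambda}^2)^* = \widehat{\lambda}^1$, so $\widehat{\lambda}^2$ is surjective precisely when $\widehat{\lambda}^1$ is injective, i.e.\ when $\lambda$ is nondegenerate. In that situation $\widehat{\lambda}^2 \colon V\times V\to V^*$ is onto, so the linear functional $\gamma(\lambda) \in V^*$ lies in its image trivially.

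For the general case I would exploit equation \eqref{eq:trilinear_sum}, which gives an isomorphism $\lambda \cong (R,0)\oplus (V/R,\bar\lambda)$ with $\bar\lambda$ nondegenerate. By equation \eqref{eq:char_elt_sum} the characteristic element decomposes as $\gamma(\lambda) = (0,\gamma(\bar\lambda))$. Applying the nondegenerate case to $\bar\lambda$ yields $(u,v)\in V/R\times V/R$ with $\widehat{\bar\lambda}^2(u,v) = \gamma(\bar\lambda)$. Choosing a splitting $V \cong R\oplus V/R$ and lifting to $(\tilde u,\tilde v)\in V\times V$, I would verify that $\widehat{\lambda}^2(\tilde u,\tilde v) = \gamma(\lambda)$: for any $w = r + \bar w$ with $r \in R$ and $\bar w \in V/R$, the term $\lambda(\tilde u,\tilde v,r)$ vanishes because $r$ lies in the radical, leaving $\bar\lambda(u,v,\bar w) = \gamma(\bar\lambda)(\bar w) = \gamma(\lambda)(w)$, where the last equality uses that every monomial in the expansion of $\lambda(w,w,w)$ containing a factor from $R$ vanishes.

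No step looks like a genuine obstacle: the whole argument is an orthogonal-sum reduction followed by a one-line appeal to the duality $(\widehat{\lambda}^2)^* = \widehat{\lambda}^1$. The only point requiring a tiny bit of care is checking that the lift $(\tilde u,\tilde v)$ really pairs as claimed with elements of $R$ to give zero, which follows immediately from $R$ being the radical of $\lambda$ (not merely of $\bar\lambda$).
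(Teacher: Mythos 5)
Your proposal is correct and follows essentially the same route as the paper: decompose via \eqref{eq:trilinear_sum} into the zero form on the radical plus a nondegenerate form, apply \eqref{eq:char_elt_sum} to split the characteristic element, and invoke the fact that nondegeneracy is equivalent to surjectivity of $\widehat{\lambda}^2$. The only difference is that you spell out the lifting step at the end, which the paper leaves implicit.
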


\begin{proof}
By \eqref{eq:trilinear_sum}, we have that $(V, \lambda) = (R, 0) \oplus (V/R, \bar \lambda)$ is
the the sum of a zero form and the nondegenerate form $(V, \bar \lambda)$.
Hence $\gamma(\lambda) = (0, \gamma(\bar \lambda))$ by \eqref{eq:char_elt_sum}.
But nondegenerate trilinear forms are precisely those for which the second adjoint is onto.
Hence $\gamma(\bar \lambda) \in \opn{im}(\widehat{\bar \lambda}^2)$
and so $\gamma(\lambda) \in \opn{im}(\widehat{\lambda}^2)$.
\end{proof}


\subsection{The Pontrjagin cube}
The 
purpose of this section is to prove Lemma \ref{lem:R_3k}. Let $\alpha \in TH^3(X)$ with period $n$
and let

$$\beta^{\Z/n} \colon H^*(X; \Z/n) \to H^{*+1}(X) $$
be the mod~$n$ Bockstein, which lies is the exact sequence
$$ H^*(X; \Z/n) \xra{~\beta^{\Z/n}~}
H^{*+1}(X) \xra{~\times n~} H^{*+1}(X).
$$
As $\alpha$ has period $n$
we see that $\alpha = \beta^{\Z/n}(\xi)$ for some $\xi \in H^2(X; \Z/n)$.
In the case where $n = 3k$, where we have the Pontrjagin Cube
\[ P_3 \colon H^2(X; \Z/3k) \to H^6(X; \Z/9k), \]
which satisfies
\begin{equation}\label{eq:Pontryajin mod 3}
\rho_3(\xi^3)=\rho_3(P_3(\xi)),
\end{equation}
where $\rho_3$ denotes reduction modulo $3$. Recall that in Subsection \ref{ss:TAHSS} we defined classes $R_n\in H^7(K_n)$.
We use $R_n$ to denote the cohomology operation defined
by $R_n$ and this cohomology operation satisfies
\[ R_n(\xi) :=
\begin{cases}
\bigl[ \beta^{\Z/n}(\xi^3) \bigr]  & 3\nmid n, \\
\bigl[\beta^{\Z/3n}(P_3(\xi)) \bigr] & 3|n.
\end{cases}\]
Next we consider the following commutative diagram
\[ \xymatrix{
H^*(X; \Z/3k) \ar[d]^{\rho_3} \ar[r]^(0.6){\beta^{\Z/3k}} &
H^{*+1}(X) \ar[d]^{\times k} \\
H^*(X; \Z/3) \ar[r]^(0.575){\beta^{\Z/3}} &
H^{*+1}(X),
 } \]
%
whose commutativity
follows from the following commutative diagram of coefficient short exact sequences:
\begin{equation}\label{eq:Bockstein}
 \xymatrix{
\Z \ar[d]_{\times k} \ar[r]^{\times 3k} &
\Z \ar[d]^{=} \ar[r]^(0.425){\rho_{3k}} &
\Z/3k \ar[d]^{\rho_3} \\
\Z \ar[r]^{\times 3} &
\Z \ar[r]^(0.45){\rho_3} &
\Z/3 }
\end{equation}
Hence for all $x \in H^*(X; \Z/3k)$ we have the equation
\begin{equation}\label{eq:Bockstein multiple}
\beta^{\Z/3}(\rho_3(x)) = k\beta^{\Z/3k}(x),
\end{equation}
from which we obtain the following

\begin{lemma}\label{lem:R_3k}
For any space $X$ and $\xi\in H^2(X;\Z/3k)$, the cohomology operation $R_{3k}$ satisfies
\[3kR_{3k}(\xi)=\beta^{\Z/3}(\rho_3(\xi^3)).\]
\end{lemma}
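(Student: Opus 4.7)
The statement to prove is a direct consequence of the definitions together with the commutative diagram of Bockstein homomorphisms already set up in the excerpt. I would proceed in three short steps.

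First, unpack the definition of $R_{3k}$. Since $3 \mid 3k$, the definition gives
\[R_{3k}(\xi) = \beta^{\Z/9k}(P_3(\xi)) \in H^7(X),\]
so the left-hand side of the claimed identity equals $3k \cdot \beta^{\Z/9k}(P_3(\xi))$.

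Second, apply the Bockstein identity \eqref{eq:Bockstein multiple} with the roles of $(k,3k)$ replaced by $(3k,9k)$; this is legal because the commutative diagram \eqref{eq:Bockstein} is set up for an arbitrary positive integer, and substituting $3k$ for $k$ only changes labels. Applied to the class $P_3(\xi) \in H^6(X;\Z/9k)$, it yields
\[\beta^{\Z/3}\bigl(\rho_3(P_3(\xi))\bigr) = 3k\,\beta^{\Z/9k}(P_3(\xi)) = 3k\,R_{3k}(\xi).\]

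Third, invoke the defining property \eqref{eq:Pontryajin mod 3} of the Pontrjagin cube, namely $\rho_3(P_3(\xi)) = \rho_3(\xi^3)$. Substituting into the previous display produces
\[3k\,R_{3k}(\xi) = \beta^{\Z/3}\bigl(\rho_3(\xi^3)\bigr),\]
which is the claim. There is no real obstacle here: the only subtlety is keeping track of the two different Bocksteins $\beta^{\Z/9k}$ and $\beta^{\Z/3}$ and checking that the coefficient short exact sequence diagram \eqref{eq:Bockstein} applies with $3k$ in place of $k$, which it does verbatim.
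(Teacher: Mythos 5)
Your proposal is correct and follows essentially the same route as the paper's own proof: unpack the definition of $R_{3k}$, apply the Bockstein identity \eqref{eq:Bockstein multiple} (equivalently, the diagram \eqref{eq:Bockstein}) with the diagram's parameter set to $3k$ so that it relates $\beta^{\Z/3}\circ\rho_3$ to $3k\,\beta^{\Z/9k}$, and finish with the Pontrjagin-cube identity \eqref{eq:Pontryajin mod 3}. If anything, your version is stated a bit more cleanly than the paper's, which introduces an auxiliary variable $m$ and then writes $\Z/9k$ in a spot where consistency would demand $\Z/9m$; you avoid that slip by keeping the lemma's $k$ throughout.
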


\begin{proof}
This follows by direct computation, taking $k=3m$ in (\ref{eq:Bockstein}):
\begin{equation}\label{eq:R_3k}
\begin{split}
&3mR_{3m}(\xi)\\
=&3m\beta^{\Z/9k}(P_3(\xi)) \\
=&\beta^{\Z/3} \rho_3(P_3(\xi)) \\
=&\beta^{\Z/3}(\rho_3(\xi^3)). \\
\end{split}
\end{equation}
The last step follows from (\ref{eq:Pontryajin mod 3}).
\end{proof}

\subsection{Mod~3 Wu classes} \label{ss:Wu}
Let $p$ be a prime number and $M$ a closed (and oriented if $p>2$) manifold of dimension $n$. One readily verifies that the functions
\begin{equation*}
\begin{cases}
H^{n-r}(M;\mathbb{Z}/2)\rightarrow\mathbb{Z}/2,\quad u\mapsto
\opn{Sq}^r(u)\cap [M],\textrm{ if }p=2,\\
H^{n-2r(p-1)}(M;\mathbb{Z}/p)\rightarrow\mathbb{Z}/p,\quad u\mapsto
\mathscr{P}^{r}(u)\cap [M],\textrm{ if }p>2,
\end{cases}
\end{equation*}
are $\mathbb{Z}/p$-linear, where $\opn{Sq}^r$ and $\mathscr{P}^{r}$ are the Steenrod reduced power operations. By Poincar{\'e} duality, there are ``mod $p$ Wu classes'' $v_r\in H^r(M;\mathbb{Z}/2)$ and $v_r\in H^{2r(p-1)}(M;\mathbb{Z}/p)$ for $p>2$, such that
\begin{equation}\label{eq:Wu classes}
\begin{cases}
\opn{Sq}^r(u)=v_ru, \textrm{ if }p=2,\ u\in H^{n-r}(M;\Z/2),\\
\mathscr{P}^{r}(u)=v_ru, \textrm{ if }p>2,\ u\in H^{n-2r(p-1)}(M;\Z/p).
\end{cases}
\end{equation}
In \cite{Wu1955Pontrjagin}, Wu expressed the mod $2$ Wu classes as polynomials in Stiefel-Whitney classes of $M$. For $p>2$, Hirzebruch determined the mod $p$ Wu classes as polynomials in the Pontrjagin classes (Theorem 1, \cite{hirzebruch1953steenrod}). In particular, his work yields
\begin{proposition}\label{pro:mod 3 Wu classes}
When $p=3$, we have
\[v_1=\rho_3(p_1),\]
where $p_1$ is the first Pontrjagin class of $M$.
\end{proposition}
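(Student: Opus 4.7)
The plan is to apply Theorem~1 of \cite{hirzebruch1953steenrod}, which asserts that the mod~$p$ Wu classes of a closed oriented manifold $M$ are given by a universal polynomial in the mod~$p$ reductions of the Pontrjagin classes of the tangent bundle. For $p=3$ and $r=1$, the class $v_1$ lies in $H^4(M;\Z/3)$. Since $p_i \in H^{4i}(M)$, the only Pontrjagin monomial of total degree $4$ is $p_1$, so Hirzebruch's theorem produces a universal constant $c \in \F_3$ with $v_1 = c \cdot \rho_3(p_1)$ for every such $M$.

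To identify $c$, I would evaluate both sides on the sample manifold $M = \mathbb{CP}^4$. With $x \in H^2(\mathbb{CP}^4)$ the hyperplane generator, the tangent bundle has total Chern class $(1+x)^5$, giving $p_1 = c_1^2 - 2 c_2 = 5x^2$ and hence $\rho_3(p_1) = 2x^2$. On the other hand, the instability relation $\mathscr{P}^1(x) = x^3$ together with the Cartan formula yields $\mathscr{P}^1(x^2) = 2 x^4$. Substituting $u = x^2$ into the defining identity \eqref{eq:Wu classes} and pairing with $[\mathbb{CP}^4]$ gives
\[
2 = \langle \mathscr{P}^1(x^2), [\mathbb{CP}^4]\rangle = \langle v_1 \cdot x^2, [\mathbb{CP}^4] \rangle = c \langle \rho_3(p_1) \cdot x^2, [\mathbb{CP}^4] \rangle = 2c
\]
in $\F_3$, so $c = 1$ and the proposition follows.

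The only real subtlety is quoting Hirzebruch's theorem in a form that guarantees universality of the polynomial across all closed oriented manifolds; once that is in hand, the degree constraint reduces the problem to a single scalar, and the sample computation on $\mathbb{CP}^4$ fixes its value.
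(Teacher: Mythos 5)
Your proof is correct and is close to the paper's approach, which simply cites Hirzebruch's explicit computation (Theorem~1 of \cite{hirzebruch1953steenrod}): there, the mod~$p$ Wu classes are written out as explicit polynomials in the mod~$p$ reductions of the Pontrjagin classes, and in degree $4$ the formula reads off as $v_1=\rho_3(p_1)$ directly. You instead invoke only the weaker, structural half of Hirzebruch's theorem — that $v_1$ is \emph{some} universal polynomial in the $\rho_3(p_i)$ — note that the degree constraint leaves a single undetermined scalar $c\in\F_3$ multiplying $\rho_3(p_1)$, and then fix $c$ by a sample computation on $\mathbb{CP}^4$. The computation itself checks out: $p_1(T\mathbb{CP}^4)=5x^2$ so $\rho_3(p_1)=2x^2$; the instability axiom gives $\mathscr{P}^1(x)=x^3$ and the Cartan formula gives $\mathscr{P}^1(x^2)=2x^4$; pairing with $[\mathbb{CP}^4]$ and using the defining identity \eqref{eq:Wu classes} with $u=x^2$ yields $2=2c$, hence $c=1$. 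This is a valid, slightly more self-contained variant of the paper's argument — it demands less of the reader than unpacking Hirzebruch's explicit polynomial formula, at the small cost of a sample evaluation.
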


\section{Proof of Theorem \ref{thm:main}}
\label{sec:main}
In this section we prove Theorem \ref{thm:main}.
Recall that $M$ is a closed, connected, oriented manifold of dimension $8$
with first Pontrjagin class $p_1 \in H^4(M)$
and that $V$ is the $\Z/3$-vector space $V:=TH^2(M)/3 TH^2(M)$.
For any cohomology class $x\in H^2(M;\Z/3)$, we have the symmetric trilinear function
\[\lambda_x^V: V\times V\times V\rightarrow\Z/3,\quad ([z_1] , [z_2], [z_3])\mapsto \an{xz_1z_2z_3, [M]}
= b_M^3(\beta^{\Z/3}(x), z_1z_2z_3).
\]
For $_{3}TH^2(M)^\wedge \subseteq
TH^2(M)^\wedge$ the subgroup of $3$-torsion elements, we have the isomorphism
\[ \iota_3 \colon V^* \to  {_{3}TH^2(M)}^\wedge,
\quad f \mapsto (z \mapsto f(z)).\]
We also recall the following adjoint of the linking pairing of $M$,
which is the isomorphism
\[ (\wh b_M^7)_l \colon TH^7(M) \to TH^2(M)^\wedge,
\quad w \mapsto (z \mapsto b_M^7(w, z)).\]
\begin{lemma}\label{lem:cub to difference}
Let $M$ be a closed oriented manifold of dimension $8$ and $x\in H^2(M;\Z/3)$.
Then
\[ \iota_3(\gamma(\lambda^V_x))= (\wh b_M^7)_l(\beta^{\mathbb{Z}/3}(x)p_1-\beta^{\mathbb{Z}/3}(x^3)).\]
\end{lemma}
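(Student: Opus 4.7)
The plan is to verify the claimed equality of elements of ${_{3}TH^2(M)^\wedge} \subseteq TH^2(M)^\wedge$ by evaluating both sides on an arbitrary $z \in TH^2(M)$ and reducing to an identity in $H^8(M; \Z/3)$ via the compatibility (\ref{eq:beta}) between the linking pairing and the Bockstein.

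First, I would unpack the left hand side. Unwinding the definitions of $\gamma$, $\lambda^V_x$ and $\lambda_x$, the value of $\iota_3(\gamma(\lambda_x^V))$ at $z$ is
\[\iota_3(\gamma(\lambda_x^V))(z) \;=\; \lambda_x(z,z,z) \;=\; \iota_3(\an{xz^3, [M]}) \;=\; b_M^3(\beta^{\Z/3}(x), z^3),\]
where in the angle brackets $z^3 \in TH^6(M)$ is reduced modulo $3$ to pair with $x \in H^2(M; \Z/3)$, and the last equality is (\ref{eq:beta}). For the right hand side, since $p_1$ is integral, the Bockstein identity $\beta^{\Z/3}(a \cdot \rho_3(b)) = \beta^{\Z/3}(a) \cdot b$ for integral $b$, together with Proposition \ref{pro:mod 3 Wu classes} (which identifies $\rho_3(p_1) = v_1$), gives
\[\beta^{\Z/3}(x)p_1 - \beta^{\Z/3}(x^3) \;=\; \beta^{\Z/3}(x v_1 - x^3).\]
A second application of (\ref{eq:beta}) then yields $(\wh b_M^7)_l(\beta^{\Z/3}(xv_1 - x^3))(z) = \iota_3(\an{(xv_1 - x^3) z, [M]})$.

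The proof thus reduces to the cohomological identity $\an{xz^3, [M]} = \an{(xv_1 - x^3)z, [M]}$ in $\Z/3$. I would establish this by applying the Wu formula (\ref{eq:Wu classes}) with $p=3$, $r=1$, $n=8$ to the degree-$4$ class $xz \in H^4(M;\Z/3)$, giving $v_1(xz) = \mathscr{P}^1(xz)$. The Cartan formula for $\mathscr{P}^1$, together with the fact that $\mathscr{P}^1$ cubes degree-$2$ classes (so $\mathscr{P}^1(x)=x^3$ and $\mathscr{P}^1(z)=z^3$), yields
\[\mathscr{P}^1(xz) \;=\; \mathscr{P}^1(x)z + x\mathscr{P}^1(z) \;=\; x^3 z + x z^3.\]
Rearranging produces $(xv_1 - x^3)z = xz^3$ in $H^8(M;\Z/3)$, as required.

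The argument is essentially bookkeeping; the only substantive inputs are (\ref{eq:beta}), the Wu formula, the Cartan formula for $\mathscr{P}^1$, and Proposition \ref{pro:mod 3 Wu classes}. The conceptual heart is that Proposition \ref{pro:mod 3 Wu classes} converts the topological datum $p_1$ into the Steenrod-theoretic quantity $v_1$, which then combines with $x^3$ via the Cartan expansion of $\mathscr{P}^1(xz)$ to produce the cube $xz^3$ appearing on the left hand side. I do not anticipate any genuine obstacle beyond carefully tracking integral versus mod~$3$ coefficients in the chain of identifications.
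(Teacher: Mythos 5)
Your proof is correct and is essentially the same argument as the paper's: both evaluate at an arbitrary $z \in TH^2(M)$, reduce to the $\Z/3$-cohomological identity $xz^3 = xp_1z - x^3z$ (equivalently $(xv_1 - x^3)z = xz^3$) via the Cartan formula for $\mathscr{P}^1(xz)$ together with the Wu formula and Proposition \ref{pro:mod 3 Wu classes}, and then translate back through the compatibility \eqref{eq:beta}. The only cosmetic difference is that you explicitly insert the Bockstein identity $\beta^{\Z/3}(x)\,p_1 = \beta^{\Z/3}(x\,\rho_3(p_1))$ to move $p_1$ inside the Bockstein before applying \eqref{eq:beta}, a step the paper leaves implicit.
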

\begin{proof}
Fix the prime number $3$ and let $\mathscr{P}^r$ be the $r$th Steenrod reduced power operation.
For $z \in TH^2(M)$ the Cartan formula gives
\[\mathscr{P}^1(xz)= x^3z+xz^3.\]
On the other hand, by Proposition \ref{pro:mod 3 Wu classes} we have
\[\mathscr{P}^1(xz)= xp_1z.\]
Therefore, we have
\begin{equation}\label{eq:xz^3}
xz^3 = xp_1z-x^3z.
\end{equation}
Now we have
\begin{align*}
\iota_3(\gamma(\lambda_x^V))([z]) =& \iota_3(\langle xz^3, [M] \rangle) \\
=& \iota_3(\langle(xp_1z-x^3z, [M]\rangle) \quad \textrm{ (using (\ref{eq:xz^3}))}\\
=& b_M^7(\beta^{\Z/3}(x)p_1 - \beta^{\Z/3}(x^3), z) \\
=& (\wh b_M^7)_l(\beta^{\Z/3}(x)p_1 - \beta^{\Z/3}(x^3))(z).
\end{align*}
Since $[z] \in V$ is arbitrary, the required equation holds.
\end{proof}

\begin{lemma}\label{lem:Mod 3 relation}
Let $M$ be a closed oriented manifold of dimension $8$, and $x\in H^2(M;\Z/3)$. Then there are
$z_1, z_2\in H^2(M)$ such that
\[\beta^{\mathbb{Z}/3}(x^3)=\beta^{\mathbb{Z}/3}(x)(p_1-z_1z_2).\]
\end{lemma}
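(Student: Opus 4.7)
The plan is to combine Lemma \ref{lem:diag3} with Lemma \ref{lem:cub to difference} and the perfectness of the linking pairing of $M$, then invoke the standard compatibility of the integral Bockstein with cup products by integral classes.

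First, I would apply Lemma \ref{lem:diag3} to the symmetric trilinear form $(V, \lambda_x^V)$ on the $\F_3$-vector space $V = TH^2(M)/3\, TH^2(M)$. This produces classes $[z_1], [z_2] \in V$ with
\[\lambda_x^V([z_1], [z_2], [z]) \;=\; \gamma(\lambda_x^V)([z]) \quad \textrm{for all } [z] \in V.\]
Choose integral lifts $z_1, z_2 \in TH^2(M) \subseteq H^2(M)$; these are the classes claimed in the statement.

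Next, applying $\iota_3$ to both sides of the displayed equation and unwinding the definition of $\lambda_x^V$, equation \eqref{eq:beta} (applied with $l=3$, $k=7$, and the mod-$3$ class $x \cdot \rho_3(z_1 z_2) \in H^6(M; \Z/3)$) identifies the right-hand side with $b_M^7(\beta^{\Z/3}(x \cdot \rho_3(z_1 z_2)), z)$, while Lemma \ref{lem:cub to difference} identifies the left-hand side with $b_M^7(\beta^{\Z/3}(x) p_1 - \beta^{\Z/3}(x^3), z)$. Both $\beta^{\Z/3}(x) p_1 - \beta^{\Z/3}(x^3)$ and $\beta^{\Z/3}(x \cdot \rho_3(z_1 z_2))$ lie in the $3$-torsion of $TH^7(M)$, and the restriction of the adjoint $(\wh b_M^7)_l$ to this $3$-torsion gives an isomorphism onto $V^*$. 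The equality of the two pairings for every $[z] \in V$ therefore forces
\[\beta^{\Z/3}(x) p_1 - \beta^{\Z/3}(x^3) \;=\; \beta^{\Z/3}(x \cdot \rho_3(z_1 z_2))\]
in $TH^7(M)$.

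Finally, I would invoke the standard identity $\beta^{\Z/n}(a \cdot \rho_n(b)) = \beta^{\Z/n}(a) \cdot b$ for $a \in H^*(M; \Z/n)$ and $b \in H^*(M)$, which follows from the naturality of the connecting homomorphism with respect to cup product by a fixed integral cocycle. Applied with $n=3$, $a = x$, and $b = z_1 z_2$, it rewrites the right-hand side as $\beta^{\Z/3}(x) \cdot z_1 z_2$, and rearranging yields
\[\beta^{\Z/3}(x^3) \;=\; \beta^{\Z/3}(x)(p_1 - z_1 z_2),\]
as required. The only mildly delicate step is the translation between the trilinear-form conclusion of Lemma \ref{lem:diag3} (which a priori holds only in the mod-$3$ quotient $V$) and an honest equation in the integral cohomology of $M$; this is precisely what the restriction of the linking pairing to $3$-torsion, combined with the Bockstein identity, handles cleanly.
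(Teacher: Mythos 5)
Your proof is correct and follows essentially the same route as the paper: apply Lemma \ref{lem:diag3} to produce $[z_1],[z_2]$, compare the resulting equation $\gamma(\lambda_x^V) = \widehat{\lambda}_x^V{}^2([z_1],[z_2])$ with Lemma \ref{lem:cub to difference} via $\iota_3$ and the adjoint of the linking pairing, and conclude by perfectness. The only difference is one of exposition: where the paper compresses the passage from the trilinear-form identity to the equation $\iota_3(\gamma(\lambda_x^V)) = (\widehat{b}_M^7)_l(\beta^{\Z/3}(x)z_1z_2)$ into a single ``by definition,'' you explicitly trace this through \eqref{eq:beta} (with $l=3$, $k=7$) and the derivation identity $\beta^{\Z/3}(x\cdot\rho_3(z_1z_2)) = \beta^{\Z/3}(x)z_1z_2$, which is a welcome clarification rather than a different argument.
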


\begin{proof}
By Lemma \ref{lem:diag3} we have $z_1, z_2\in TH^2(M)$ satisfying
\[\gamma(\lambda^V_x) =  (\wh \lambda_x^V)_2([z_1], [z_2]) \]
and by definition, this means that
\[ \iota_3(\gamma(\lambda^V_x)) = (\wh b_M^7)_l(\beta^{\Z/3}(x)z_1z_2).\]
Now by Lemma \ref{lem:cub to difference} we have
\[ \iota_3(\gamma(\lambda^V_x)) = (\wh b_M^7)_l(\beta^{\mathbb{Z}/3}(x)p_1-\beta^{\mathbb{Z}/3}(x^3)). \]
Since $(\wh b_M^7)_l$ and $\iota_3 \colon V^* \to  {_{3}TH^2(M)}^\wedge$
are isomorphisms, we conclude that
\[ \beta^{\Z/3}(x)z_1z_2 = \beta^{\mathbb{Z}/3}(x)p_1-\beta^{\mathbb{Z}/3}(x^3) \]
and so
\[\beta^{\mathbb{Z}/3}(x^3)=\beta^{\mathbb{Z}/3}(x)(p_1-z_1z_2).\]
\end{proof}

Before stating Proposition \ref{pro:d3} we
recall the AHSS 
of the twisted $K$-theory of a pair $(X, \alpha)$, which we denote by 
$(\widetilde{E}_*^{*,*},\widetilde{d}_*^{*,*})$. We denote the untwisted one by $(E_*^{*,*}, d_*^{*,*})$. Then it follows from Bott periodicity theorem that we have
\begin{equation*}
E_2^{s,t}=\widetilde{E}_2^{s,t}\cong
\begin{cases}
H^s(X),\ \textrm{if $t$ is even},\\
0,\ \textrm{if $t$ is odd}.
\end{cases}
\end{equation*}
In view of the above identification we do not distinguish $H^s(X)$, $E_2^{s,t}$ and $\widetilde{E}_2^{s,t}$ for even $t$.  We also recall the cohomology operation $R_n\in H^7(K_n)$ of order $\epsilon_3(n)n$.

\begin{proposition}\label{pro:d3}
Let $M$ be a closed oriented $8$-manifold, 
$\alpha\in TH^3(M)$ with period $n$,
 $\xi\in H^2(M;\Z/n)$ such that $\beta^{\Z/n}(\xi)=\alpha$
 and $(\widetilde{E}_*^{*,*},\widetilde{d}_*^{*,*})$ be the 
AHSS for the twisted $K$-theory of the pair  $(M,\alpha)$. 
Identifying $H^7(M)$ with $\widetilde{E}_2^{7,-6}$, we have
\[nR_{n}(\xi)\in\opn{im}(\widetilde{d}_3^{4,-4}).\]
\end{proposition}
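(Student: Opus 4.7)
The plan is to reduce to the case $3\mid n$, use Lemmas \ref{lem:R_3k} and \ref{lem:Mod 3 relation} to express $nR_n(\xi)$ as $\alpha\cup v$ for some $v\in H^4(M)$, and then apply the known formula for the $\widetilde{d}_3$ differential in the twisted Atiyah-Hirzebruch spectral sequence, using a primary decomposition argument to handle an error term.

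If $3\nmid n$, then by definition $R_n(\xi)=\beta^{\Z/n}(\xi^3)$, so $nR_n(\xi)=0$ because the image of $\beta^{\Z/n}$ consists of $n$-torsion classes, and the conclusion is trivial. For the remainder assume $n=3k$. Lemma \ref{lem:R_3k} then identifies $nR_n(\xi)$ with $\beta^{\Z/3}(\rho_3(\xi)^3)$. Applying Lemma \ref{lem:Mod 3 relation} with $x=\rho_3(\xi)$, and using \eqref{eq:Bockstein multiple} to identify $\beta^{\Z/3}(\rho_3(\xi))=k\alpha$, I would produce $z_1,z_2\in H^2(M)$ with
\[ nR_n(\xi) \;=\; k\alpha(p_1 - z_1z_2) \;=\; \alpha\cup v, \qquad v := k(p_1-z_1z_2)\in H^4(M). \]

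To conclude I would invoke the Atiyah--Segal formula
\[ \widetilde{d}_3^{4,-4}(u) \;=\; \opn{Sq}^3_{\Z}(u) \pm \alpha\cup u \]
for the twisted AHSS differential on $H^4(M)=\widetilde{E}_2^{4,-4}$. Choosing the sign and a corresponding $u=\pm v$ yields $\widetilde{d}_3^{4,-4}(u) = \tau + nR_n(\xi)$, where $\tau = \pm\opn{Sq}^3_{\Z}(v)$ is a $2$-torsion class (since $\opn{Sq}^3_{\Z}=\beta^{\Z/2}\opn{Sq}^2\rho_2$ factors through the mod-$2$ Bockstein), whereas $nR_n(\xi)\in\opn{im}(\beta^{\Z/3})$ is a $3$-torsion class. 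Since $\opn{im}(\widetilde{d}_3^{4,-4})$ is a subgroup of $H^7(M)$ and the orders of $\tau$ and $nR_n(\xi)$ are coprime powers of $2$ and $3$, the Chinese Remainder Theorem provides an integer $m$ with $m\equiv 0\pmod{\ord(\tau)}$ and $m\equiv 1\pmod{\ord(nR_n(\xi))}$, giving $\widetilde{d}_3^{4,-4}(m u) = m\tau + m\cdot nR_n(\xi) = nR_n(\xi)$, as required.

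The main obstacle, in my view, is the handling of the $\opn{Sq}^3_{\Z}$ error term in the $\widetilde{d}_3$ formula. The key conceptual point is that this term is $2$-primary and cannot interact with the $3$-primary class $nR_n(\xi)$, so the primary decomposition (Chinese Remainder) argument cleanly isolates the desired image element. A subsidiary point is to verify that the sign conventions for $\widetilde{d}_3$ are consistent with those of the paper, but this does not affect the structure of the argument.
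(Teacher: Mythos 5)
Your proposal is correct and follows the paper's reduction ($3\nmid n$ trivially, then use Lemma~\ref{lem:R_3k} and Lemma~\ref{lem:Mod 3 relation} together with \eqref{eq:Bockstein multiple} to write $nR_n(\xi)=\alpha\cup v$ with $v=k(p_1-z_1z_2)$, then feed $v$ into the formula $\widetilde d_3(y)=\opn{Sq}^3_{\Z}(y)+(-1)^{s+t}y\alpha$), but it diverges from the paper at the crucial point of handling the $\opn{Sq}^3_{\Z}$ term. The paper shows this term \emph{vanishes outright}: $\opn{Sq}^3_{\Z}(p_1)=0$ because $p_1$ is a Chern class pulled back from $BU$, whose cohomology is concentrated in even degrees, and $\opn{Sq}^3_{\Z}(z_1z_2)=0$ because $z_1z_2$ is pulled back from $K(\Z,2)\times K(\Z,2)$, which likewise has only even cohomology; hence $\widetilde d_3^{4,-4}(k(p_1-z_1z_2))=nR_n(\xi)$ with no correction needed. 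You instead observe only that the error $\tau=\opn{Sq}^3_{\Z}(v)$ is $2$-torsion while $nR_n(\xi)$ is $3$-torsion, and use coprimality to scale it away: since $2\tau=0$ and $3\cdot nR_n(\xi)=0$, taking $m$ with $m\equiv 0\pmod 2$ and $m\equiv 1\pmod 3$ (e.g.~$m=4$) gives $\widetilde d_3^{4,-4}(mv)=nR_n(\xi)$. Both arguments are valid. The paper's is sharper — it exploits the specific algebraic form of $v$ and yields an explicit preimage — whereas your primary-decomposition argument is more robust, requiring nothing about $v$ beyond $\alpha v=nR_n(\xi)$; it would still work even if $\opn{Sq}^3_{\Z}(v)$ happened to be nonzero. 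The incidental sign ambiguity you flag is benign: for $(s,t)=(4,-4)$ the sign in the module formula is $+1$, and your $m$-scaling absorbs either sign anyway.
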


\begin{proof}
If $3\nmid n$, then we have $nR_n=0$ and there is nothing to show. Therefore we assume $n=3k$.

First we recall some general facts on the Atiyah-Hirzebruch spectral sequences of both the twisted and untwisted $K$-theories.  Then $(\widetilde{E}_*^{*,*},\widetilde{d}_*^{*,*})$ is a bi-graded module over $(E_*^{*,*}, d_*^{*,*})$. More precisely, suppose $u\in E_r^{s,t}$ and $v\in \widetilde{E}_r^{s',t'}$. Then the module structure yields  $uv\in\widetilde{E}_r^{s+s',t+t'}$ and furthermore,
\[\widetilde{d}_r(uv)=d_r(u)v+(-1)^{s+t}u\widetilde{d}_r(v).\]
It follows from Bott periodicity that $E_{2i}^{*,*}=E_{2i+1}^{*,*}$ for all $i>0$. For $u\in E_2^{*,*}=E_3^{*,*}$,
by the discussion in 2.3 of \cite{atiyah1961vector}, we have $d_3(u)=\opn{Sq}_{\Z}^3(u)$, where $\opn{Sq}^3_{\Z}$ be the $3$rd integral Steenrod square operation. On the other hand, by Proposition 2.4 of \cite{antieau2014period}, we have $\widetilde{d}_3(1)=\alpha$. Therefore, for $y\in\widetilde{E}_2^{s,t}$, we have
\begin{equation}\label{eq:d_3}
\begin{split}
&\widetilde{d}_3(y)=\widetilde{d}_3(y\cdot 1)\\
=&\opn{Sq}_{\Z}^3(y)+(-1)^{s+t}y\widetilde{d}_3(1)\\
=&\opn{Sq}_{\Z}^3(y)+(-1)^{s+t}y\alpha.
\end{split}
\end{equation}

It follows from Lemma \ref{lem:Mod 3 relation} that we may take $z_1, z_2\in H^2(M)$ such that
\[\beta^{\mathbb{Z}/3}(\rho_3(\xi^3))=(p_1-z_1z_2)\beta^{\mathbb{Z}/3}(\rho_3(\xi)).\]
By Lemma \ref{lem:R_3k} we have
\begin{equation}\label{eq:d_3'}
nR_n(\xi)=\beta^{\mathbb{Z}/3}(\rho_3(\xi^3))=(p_1-z_1z_2)\beta^{\mathbb{Z}/3}(\rho_3(\xi))=m(p_1-z_1z_2)\alpha,
\end{equation}
where the last equation follows from (\ref{eq:Bockstein multiple}).

On the other hand, we have  $\opn{Sq}^3_{\Z}(p_1)=0$ since by definition $p_1$ is the Chern class of some vector bundle over $M$, and $H^*(BU)$ concentrates in even degrees. Similarly we have  $\opn{Sq}^3_{\Z}(z_1z_2)=0$ since $H^*(K(\Z,2)^{\times2})$ concentrates in even degrees. Therefore we have
\begin{equation}\label{eq:d_3''}
\opn{Sq}^3_{\Z}(p_1-z_1z_2)=0.
\end{equation}

Finally, it follows from (\ref{eq:d_3}), (\ref{eq:d_3'}) and (\ref{eq:d_3''}) that we have
\[\widetilde{d}_3(m(p_1-z_1z_2))=nR_n(\xi),\]
and we conclude.
\end{proof}

\begin{proof}[Proof of Theorem \ref{thm:main}]
We can work in each connected component of $M$ individually, so we 
assume that $M$ is connected.
If $3\nmid n$, the theorem follows from Theorem \ref{thm:tpip 8-cplex'} formally. So we assume $3|n$.

Let $(\widetilde{E}_*^{*,*}, \widetilde{d}_*^{*,*})$ be the 
AHSS of the twisted $K$-theory of the pair $(X,\alpha)$, where $\opn{per}(\alpha)=n$. Let $\xi\in H^2(M;\Z/n)$ such that $\beta^{\Z/n}(\xi)=\alpha$.
By construction we have $\widetilde{E}_2^{0,0}\cong\Z$. For degree reasons there is no nontrivial differential into $\widetilde{E}_r^{0,0}$ for any $r\geq2$. We therefore identify $\widetilde{E}_r^{0,0}$ ($r\geq2$) as subgroups of $\widetilde{E}_2^{0,0}\cong\Z$, as in the setting of Theorem \ref{thm:AH diff},

It follows from (\ref{eq:d5 for K_n}) of Theorem \ref{thm:tpip 8-cplex'} that we have $\epsilon_2(n)n^2\in \widetilde{E}_7^{0,0}$. Furthermore, it follows from Proposition \ref{pro:d3} that the order of
$[R_n(\xi)]\in\widetilde{E}_7^{7,-6}$ divides $n$. Therefore, by (\ref{eq:d7 for K_n}) of Theorem \ref{thm:tpip 8-cplex'}, the order of $\widetilde{d}_7^{0,0}(\epsilon_2(n)n^2)$ divides $\frac{n}{2}$ if $4|n$ and divides $n$ otherwise. Hence we have
\begin{equation*}
\begin{cases}
\widetilde{E}_8^{0,0}\supseteq 2n^3\widetilde{E}_2^{0,0},\textrm{ if }n\equiv 2\pmod{4},\\
\widetilde{E}_8^{0,0}\supseteq n^3\widetilde{E}_2^{0,0},\textrm{ otherwise}.
\end{cases}
\end{equation*}
The theorem then follows from Theorem \ref{thm:AH diff}.
\end{proof}

\section{Proof of Theorem \ref{thm:not(2,2,4)}.} \label{sec:not(2,2,4)}

Recall that the cohomology group $H^5(K_2)$ is generated by an element $Q_2$ of order $4$. On the other hand, recall that the cohomology ring of $H^*(K(\Z,2))$ is isomorphic to the polynomial ring $\Z[\iota_2]$ with one generator $\iota_2$ of degree $2$.

Let ``$\times$'' denote the exterior cup product operation, then we have a class $2Q_2\times 1-\zeta_2\times\iota_2 \in H^5(K_2\times K(\mathbb{Z},2))$, which is represented by a homotopy class
\[2Q_2\times 1-\zeta_2\times\iota_2: K_2\times K(\Z,2)\rightarrow K(\Z,5).\]
Let $Y$ be the homotopy fiber of the above map and we have the following fiber sequence
\begin{equation}\label{eq:fiber seq}
Y\xrightarrow{h}K_2\times K(\Z,2)\xrightarrow{2Q_2\times 1-\zeta_2\times\iota_2} K(\Z,5).
\end{equation}
It then follows that
\[h^*: H^3(K_2\times K(\Z,2))\rightarrow Y\]
is an isomorphism of abelian groups. We take $\alpha_Y:=h^*(\xi_2\times 1)$ a generator of $H^3(Y)$. Then we have the following
\begin{lemma}\label{lem:universal Y}
Suppose $X$ is a connected finite CW complex and $\alpha\in H^3(X)$ satisfying $\per(\alpha)=2$ and $\opn{ord}(G_1(\alpha))=2$. Then there is a map $f: X\rightarrow Y$ satisfying $f^*(G_i(\alpha_Y))=G_i(\alpha)$, for $i=0,1,2$.
\end{lemma}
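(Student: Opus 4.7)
My plan is to construct $f$ as a lift of a product map $(a,v)\colon X\to K_2\times K(\Z,2)$ through the homotopy fibre $Y$ of $2Q_2\times 1 - \zeta_2\times\iota_2$. Since $\per(\alpha)=2$, choose a classifying map $a\colon X\to K_2$ with $a^*(\zeta_2)=\alpha$, and set $\xi := a^*(\zeta'_2)\in H^2(X;\Z/2)$, a mod-$2$ lift of $\alpha$. By formula \eqref{eq:G_1}, $G_1(\alpha) = [\lambda\, a^*(Q_2)]\in H^5(X)/\alpha H^2(X)$ for some odd $\lambda$. Since $Q_2$ has order $4$ in $H^5(K_2)$, the identity $2\lambda\, a^*(Q_2) = 2\, a^*(Q_2)$ holds in $H^5(X)$; hence the hypothesis $\opn{ord}(G_1(\alpha))=2$ produces $v\in H^2(X)$ satisfying $2\, a^*(Q_2) = \alpha\cdot v$. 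Viewing $v$ as a map $X\to K(\Z,2)$, the product $(a,v)$ pulls $2Q_2\times 1 - \zeta_2\times\iota_2$ back to $2\,a^*(Q_2) - \alpha\cdot v = 0\in H^5(X;\Z)$. Since $K(\Z,5)$ represents integral cohomology in degree $5$, the composition $(2Q_2\times 1 - \zeta_2\times\iota_2)\circ (a,v)$ is null-homotopic, and the universal property of the homotopy fibre in \eqref{eq:fiber seq} yields $f\colon X\to Y$ with $h\circ f\simeq (a,v)$.

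The compatibilities $f^*(G_i(\alpha_Y))=G_i(\alpha)$ for $i=0,1$ then follow from naturality. For $i=0$, note $f^*(\alpha_Y) = (h\circ f)^*(\zeta_2\times 1) = (a,v)^*(\zeta_2\times 1) = \alpha$. For $i=1$, take $\xi_Y := h^*(\zeta'_2\times 1)$ as a mod-$2$ lift of $\alpha_Y$; then $f^*\xi_Y = \xi$, and formula \eqref{eq:G_1} gives
\[
f^*(G_1(\alpha_Y)) = [\lambda\beta^{\Z/4}(P_2(f^*\xi_Y))] = [\lambda\beta^{\Z/4}(P_2(\xi))] = G_1(\alpha).
\]

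For $i=2$, I would invoke naturality of the twisted Atiyah--Hirzebruch spectral sequence: the map $f$ induces a morphism of AHSS commuting with all differentials, and this forces $f^*(G_2(\alpha_Y)) = G_2(\alpha)$ provided the generators $a_1'$ of $\Ker(\wt d_5^{0,0})$ agree on both sides. The main obstacle is thus to show $\opn{ord}(G_1(\alpha_Y))=2$, so that $a_1' = 4$ on both $X$ and $Y$. The upper bound $\opn{ord}(G_1(\alpha_Y))\le 2$ is immediate from the defining relation $2h^*(Q_2\times 1) = h^*(\zeta_2\times\iota_2) = \alpha_Y\cdot h^*(1\times\iota_2)$ in $H^5(Y)$. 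The matching lower bound $G_1(\alpha_Y)\ne 0$ modulo $\alpha_Y H^2(Y)$ can be extracted from a low-degree computation of $H^*(Y)$ via the Serre spectral sequence of the principal fibration $K(\Z,4)\to Y\to K_2\times K(\Z,2)$, using the vanishing of $H^q(K(\Z,4))$ in degrees $1\le q\le 3$ to identify $H^2(Y)$ and the quotient $H^5(Y)/\alpha_Y H^2(Y)$ in sufficient detail.
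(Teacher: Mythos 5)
Your construction of $f$ is correct and is, I believe, what the paper's very terse proof (``take $f$ to represent $\alpha'\in H^2(X;\Z/2)$ with $\beta^{\Z/2}(\alpha')=\alpha$; the lemma follows from \eqref{eq:Gr definition}'') is silently presupposing. The paper never explains how a mod-$2$ lift of $\alpha$ gives a map to $Y$ rather than merely to $K_2$; your argument supplies exactly the missing step, namely that the hypothesis $\opn{ord}(G_1(\alpha))=2$ forces $2\,a^*(Q_2)\in\alpha H^2(X)$, producing the second coordinate $v$ needed for the null-homotopy and hence the lift $f\colon X\to Y$ through the fibre sequence \eqref{eq:fiber seq}. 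The checks for $i=0,1$ via naturality of \eqref{eq:G_1} are fine.

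One place you can tighten the argument: you defer $\opn{ord}(G_1(\alpha_Y))\geq 2$ to a Serre spectral sequence computation of $H^*(Y)$, but this is unnecessary. Once $f$ is in hand with $f^*(\alpha_Y)=\alpha$, naturality of the twisted AHSS gives a map of spectral sequences which on $E_2^{0,0}$ is the identity $\Z\to\Z$ (both spaces connected). Hence $\wt E_7^{0,0}(Y)=a_1'(Y)\Z$ lands inside $\wt E_7^{0,0}(X)=a_1'(X)\Z=4\Z$, so $4\mid a_1'(Y)$. Combined with your upper bound $a_1'(Y)\mid 4$ (from the relation $2h^*(Q_2\times 1)=\alpha_Y\cdot h^*(1\times\iota_2)$ in $H^5(Y)$, which gives $\opn{ord}(G_1(\alpha_Y))\mid 2$), this forces $a_1'(Y)=4$. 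Then $G_2(\alpha_Y)=\wt d_7^{0,0}(4)$ on $Y$, $G_2(\alpha)=\wt d_7^{0,0}(4)$ on $X$, and naturality of $\wt d_7^{0,0}$ under $f^*$ gives $f^*(G_2(\alpha_Y))=G_2(\alpha)$ directly, with no further cohomology computation of $Y$ needed at this stage.
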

\begin{proof}
We take $f$ to be a map representing a cohomology class $\alpha'\in H^2(K_2,\Z/2)$ such that $\beta^{\Z/2}(\alpha')=\alpha$. The lemma then follows from (\ref{eq:Gr definition}).
\end{proof}

We verify some useful properties of $Y$ in the following
\begin{lemma}\label{lem:cohomology of Y}
\begin{enumerate}
\item
The map $h^*: H^i(K_2\times K(\Z,2))\rightarrow H^i(Y)$ is an isomorphism for $0\leq i\leq 3$. For $i=4$, we have the following split short exact sequence
\[0\rightarrow H^4(K_2\times K(\Z,2))\xrightarrow{h^*}H^4(Y)\rightarrow\Z\rightarrow 0.\]
In particular, the image of $h^*$ contains all torsion elements of $H^4(Y)$.
\item
Let $\zeta_2'\in H^2(K_2;\Z/2)$ be the fundamental class, and let
\[\rho_2: H^*(-;\Z)\rightarrow H^*(-;\Z/2)\]
be the $\! \! \! \pmod {2}$ reduction map. Then the class
\[h^*((\zeta_2')^2\times1-\zeta_2'\times\rho_2(\iota_2))\in H^4(Y;\Z/2)\]
has an integral lift $\nu\in H^4(Y)$, i.e., we have
\[\rho_2(a)=h^*((\zeta_2')^2\times1-\zeta_2'\times\rho_2(\iota_2)).\]
\item
\[\opn{Sq}^3_{\Z}\cdot\rho_2(\nu)=0.\]
\end{enumerate}
\end{lemma}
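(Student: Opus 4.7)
The plan is to study the Serre spectral sequence of the principal fibration
\[ K(\Z, 4) \to Y \xrightarrow{h} K_2 \times K(\Z, 2) \]
arising from (\ref{eq:fiber seq}). For part (1): since $H^q(K(\Z, 4))$ vanishes for $0 < q < 4$, there is no room for any differential to affect $H^i(Y)$ for $i \leq 3$, which yields the isomorphism claim. The only nontrivial differential influencing $H^4(Y)$ is the transgression $d_5 \colon E_5^{0, 4} = \Z \to E_5^{5, 0}$, which sends $\iota_4 \mapsto 2Q_2 \times 1 - \zeta_2 \times \iota_2$. I will use the K\"unneth decomposition $H^5(K_2 \times K(\Z, 2); \Z) \cong \Z/4\{Q_2 \times 1\} \oplus \Z/2\{\zeta_2 \times \iota_2\}$ to see that the image of $d_5$ has order $2$ and its kernel is $2\Z$. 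This gives the asserted short exact sequence, which splits because $\Z$ is free; consequently all torsion in $H^4(Y)$ lies in $\opn{im}(h^*)$.

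For part (2), I apply the long exact sequence of the coefficient sequence $0 \to \Z \to \Z \to \Z/2 \to 0$: a mod~$2$ class in $H^4(Y; \Z/2)$ lifts to an integral class if and only if its integer Bockstein vanishes. Using the Leibniz rule for $\beta^{\Z/2}$ with respect to products involving reductions of integer classes, together with the relations $\beta^{\Z/2}(\zeta_2') = \zeta_2$ and $\beta^{\Z/2}((\zeta_2')^2) = 2Q_2$ (the $n=2$ case of (\ref{eq:Pontryagin powers})), I compute
\[ \beta^{\Z/2}\bigl((\zeta_2')^2 \times 1 - \zeta_2' \times \rho_2(\iota_2)\bigr) = 2Q_2 \times 1 - \zeta_2 \times \iota_2. \]
But this is exactly the transgressed class killed by $h^*$ in forming $Y$, so the pullback has trivial Bockstein and therefore admits an integral lift $\nu$ as desired.

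For part (3), unwinding $\opn{Sq}^3_{\Z} = \beta^{\Z/2} \opn{Sq}^2 \rho_2$ and using naturality reduces the problem to computing
\[ h^*\, \beta^{\Z/2} \opn{Sq}^2 \bigl((\zeta_2')^2 \times 1 - \zeta_2' \times \rho_2(\iota_2)\bigr) \in H^7(Y; \Z). \]
The Cartan formula, combined with $\opn{Sq}^2 \zeta_2' = (\zeta_2')^2$, $\opn{Sq}^1 \zeta_2' = \rho_2 \zeta_2$, and $\opn{Sq}^1 \rho_2 \iota_2 = 0$, expands $\opn{Sq}^2$ of the bracketed expression into three summands; the first, $(\rho_2 \zeta_2)^2 \times 1 = \rho_2(\zeta_2^2 \times 1)$, has vanishing $\beta^{\Z/2}$ because it is already a mod~$2$ reduction, and the Leibniz rule turns the other two into $-2Q_2 \times \iota_2 - \zeta_2 \times \iota_2^2$. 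Pulling back to $Y$ and using the defining relation $h^*(2Q_2 \times 1) = h^*(\zeta_2 \times \iota_2)$ collapses this to $-2\, h^*(\zeta_2 \times \iota_2^2)$, which vanishes because $2 \zeta_2 = 0$ in $H^3(K_2)$.

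The principal subtlety is establishing the Leibniz-type identity $\beta^{\Z/2}(a \cdot \rho_2(B)) = \beta^{\Z/2}(a) \cdot B$ for a mod~$2$ class $a$ and an integer class $B$; this follows from the cochain-level Leibniz formula for the integer Bockstein after choosing an integer cocycle lift of $B$, for which the second term of the derivation formula disappears. Everything else is a direct spectral sequence computation or a routine application of naturality.
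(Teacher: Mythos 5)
Your argument is correct and follows essentially the same route as the paper's proof: the Serre spectral sequence of the fibration $K(\Z,4)\to Y\to K_2\times K(\Z,2)$ gives part (1), the Bockstein long exact sequence together with the observation that the transgressed class $2Q_2\times 1-\zeta_2\times\iota_2$ is killed by $h^*$ gives part (2), and the Cartan formula plus the Bockstein Leibniz identity and the same defining relation gives part (3). The only differences are expository — you spell out the $d_5$-transgression and the K\"unneth computation of $H^5$ in part (1), and justify the identity $\beta^{\Z/2}(a\cdot\rho_2(B))=\beta^{\Z/2}(a)\cdot B$ at the cochain level, both of which the paper leaves implicit.
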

\begin{proof}
By definition we have a homotopy fiber sequence
\[K(\Z,4)\rightarrow Y\xrightarrow{h}K_2\times K(\Z,2).\]
We denote by $E_*^{*,*}(Y)$ the associated cohomological integral Serre spectral sequence. The statement (1) then follows immediately from $E_2^{*,*}(Y)$.

For the statement (2), notice that by the definition of $Y$ we have
\[h^*\beta^{\Z/2}((\zeta_2')^2\times1-\zeta_2'\times\rho_2(\iota_2))=h^*(2Q_2(\zeta_2')\times1-\zeta_2\times\iota_2)=0,\]
and the statement (2) follows from the long exact sequence of cohomology groups induced from the short exact sequence of coefficients gourps
\[0\rightarrow\Z\xrightarrow{\times2}\Z\rightarrow\Z/2\rightarrow0.\]

Statement (3) follows from direct computation
\begin{equation*}
\begin{split}
 &\opn{Sq}^3_{\Z}\cdot\rho_2(\nu)\\
=&\beta^{\Z/2}\cdot\opn{Sq}^2\cdot h^*((\zeta_2')^2\times1-\zeta_2'\times\rho_2(\iota_2))\\
=&\beta^{\Z/2}\cdot h^*(\zeta_2^2\times1-(\zeta_2')^2\times\rho_2(\iota_2)-\zeta_2'\times\rho_2(\iota_2)^2)\\
=&h^*(2Q_2\times\iota_2-\zeta_2\times_2^2)\\
=&h^*[(2Q_2\times1-\zeta_2\times\iota_2)(1\times\iota_2)]=0.
\end{split}
\end{equation*}
\end{proof}

\begin{proof}[Proof of Theorem \ref{thm:not(2,2,4)}]
First we recall some general facts on the Atiyah-Hirzebruch spectral sequences of both the twisted and untwisted $K$-theories. Let $(\widetilde{E}_*^{*,*},\widetilde{d}_*^{*,*})$ be the twisted Atiyah-Hirzebruch spectral sequence for a CW complex $X$ and a class $\alpha\in TH^3(X)$ of period $n$. Then $(\widetilde{E}_*^{*,*},\widetilde{d}_*^{*,*})$ is a bi-graded module over $(E_*^{*,*}, d_*^{*,*})$. More precisely, suppose $u\in \wt{E}_r^{s,t}$ and $v\in \widetilde{E}_r^{s',t'}$. Then the module structure yields  $uv\in\widetilde{E}_r^{s+s',t+t'}$ and furthermore,
\begin{equation}\label{eq:module structure1}
\widetilde{d}_r(uv)=d_r(u)v+(-1)^{s+t}u\widetilde{d}_r(v).
\end{equation}
It follows from the Bott periodicity theorem that $E_{2i}^{*,*}=E_{2i+1}^{*,*}$ for all $i>0$. The Bott periodicity theorem also indicates
\begin{equation*}
\wt{E}_2^{s,t}\cong
\begin{cases}
H^s(X)\textrm{ $t$ even},\\
0,\textrm{ $t$ odd}.
\end{cases}
\end{equation*}
In view of the above, we identify the group $\wt{E}^{s,2t}_r$ with the corresponding subquotient of $H^s(X)$.
For $u\in\wt{E}_2^{*,*}=E_3^{*,*}$, by the discussion in 2.3 of \cite{atiyah1961vector}, we have $d_3(u)=\opn{Sq}_{\Z}^3(u)$, where $\opn{Sq}^3_{\Z}$ be the $3$rd integral Steenrod square operation. On the other hand, by Proposition 2.4 of \cite{antieau2014period}, we have $\widetilde{d}_3(1)=\alpha$. Therefore, for $y\in\widetilde{E}_2^{s,t}$, we have
\begin{equation}\label{eq:module structure2}
\begin{split}
&\widetilde{d}_3(y)=\widetilde{d}_3(y\cdot 1)\\
=&\opn{Sq}_{\Z}^3(y)+(-1)^{s+t}y\widetilde{d}_3(1)\\
=&\opn{Sq}_{\Z}^3(y)+(-1)^{s+t}y\alpha.
\end{split}
\end{equation}
It is considerably harder to obtain a similar formula for $\wt{d}_5$ since it involves unstable cohomology operations. Nonetheless we may compute $\wt{d}_5$ for
\[n\times\iota_2\in H^2(K_n\times K(\Z,2)).\]
Indeed, by (\ref{eq:module structure2}) we have $\wt{d}_3(n\times\iota_2)=0$, and since $K(\Z,2)$ has trivial cohomology in odd degrees, we have $\wt{d}_3(1\times\iota_2)=0$ and $\wt{d}_5(1\times\iota_2)=0$. Now it follows from(\ref{eq:module structure1}) that we have
\begin{equation}\label{eq:module structure3}
\begin{split}
&\wt{d}_5(n\times\iota_2)\\
=&\wt{d}_5(n)1(\times\iota_2)+n\wt{d}_5(1\times\iota_2)\\
=&Q_n\times\iota_2.
\end{split}
\end{equation}
By Lemma \ref{lem:universal Y}, it suffices to verify the proposition in the case $X=Y$ and
\[\alpha=\alpha_Y:=h^*(\zeta_2\times1).\]

Let $\xi_Y:=h^*(\zeta_2'\times1)$. It follows from Theorem \ref{thm:tpip 8-cplex'} that we have $\widetilde{d}_7^{0,0}(8)=[R_2(\xi_Y)]$, where $[R_2(\xi_Y)]\in\wt{E}_7^{7,-6}$ denotes the equivalence class of $R_2(\xi_Y)$. Therefore, it suffices to show $[R_2(\xi_Y)]=0\in\wt{E}_7^{7,-6}$.

By Lemma \ref{lem:cohomology of Y}, we have a class $\nu\in H^4(Y)$ such that
\[\rho_2(\nu)=h^*((\zeta_2')^2\times1-\zeta_2'\times\rho_2(\iota_2)).\]
By (\ref{eq:Pontryagin powers}), we have
\begin{equation*}
\begin{split}
&R_2(\xi_Y)=\beta^{\Z/2}((\xi_Y)^3)\\
=&\beta^{\Z/2}[\xi_Y\rho_2(\nu)+h^*((\zeta_2')^2\times\rho_2(\iota_2))]\\
=&\alpha_Y\nu+h^*(2Q_2\times\iota_2)\\
=&\alpha_Y\nu+Q_2(\alpha_Y')2h^*(1\times\iota_2).
\end{split}
\end{equation*}
Applying (3) Lemma \ref{lem:cohomology of Y} to the above, we have
\begin{equation}\label{eq:R_2pre}
\begin{split}
&R_2(\xi_Y)=\beta^{\Z/2}((\xi_Y)^3)\\
=&[\alpha_Y\nu+\opn{Sq}_{\Z}^3(\nu)]+Q_2(\alpha_Y')2h^*(1\times\iota_2).
\end{split}
\end{equation}
Applying (\ref{eq:module structure2}) and (\ref{eq:module structure3}) to (\ref{eq:R_2pre}), we have
\begin{equation}\label{eq:R_2}
\begin{split}
&R_2(\xi_Y)=\beta^{\Z/2}((\xi_Y)^3)\\
=&[\alpha_Y\nu+\opn{Sq}_{\Z}^3(\nu)]+Q_2(\alpha_Y')2h^*(1\times\iota_2)\\
=&\wt{d}_3(\nu)+2\wt{d}_5(2h^*(1\times\iota_2)),
\end{split}
\end{equation}
which proves $R_2(\alpha_Y')=0$, and we are done.
\end{proof}

\section{Examples} \label{sec:egs}
In this section we give examples of closed, connected, orientable $8$-manifolds $M$
with Brauer classes $\alpha \in TH^3(M)$ having interesting period vectors,
thereby proving Theorem \ref{thm:egs}.

\begin{proposition} \label{prop:ceg}
There is a closed, connected orientable, non-spin$^c$ $8$-manifold $M$ with Brauer class $\alpha \in TH^3(M)$ such that
$\perv(\alpha) = (2, 4, 2)$.  
It follows that $\ind(\alpha) = \per(\alpha)^4$ and so $\alpha$ violates
the TPIC.
\end{proposition}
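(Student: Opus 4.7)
The plan is to reduce the problem of realising the period vector $(2,4,2)$ to the simpler problem of producing $M$ and $\alpha$ with $\opn{per}(\alpha)=2$ and $\opn{ind}(\alpha)=16$. Indeed, Theorem \ref{thm:main} gives $\opn{ind}(\alpha)\,|\,2\opn{per}(\alpha)^3=16$ for any such class, while the decomposition
\[\opn{ind}(\alpha)=\opn{per}(\alpha)\cdot\opn{ord}(G_1(\alpha))\cdot\opn{ord}(G_2(\alpha))\]
combined with Theorem \ref{thm:not(2,2,4)} (which forbids $(2,2,4)$) then forces $\perv(\alpha)=(2,4,2)$. Moreover, once $\perv(\alpha)=(2,4,2)$ we immediately have $\opn{ind}(\alpha)=16=\opn{per}(\alpha)^4$, exhibiting the required violation of the TPIC.

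To realise index $16$, the universal example on the $8$-skeleton $K_2^{(8)}$ (Theorem \ref{thm:tpip 8-cplex'}) is the natural template: its class $\zeta_2$ has $\opn{ind}(\zeta_2)=16$. The strategy is therefore to construct a closed, connected, orientable $8$-manifold $M$ equipped with a class $\xi\in H^2(M;\Z/2)$ playing the role of $\zeta_2'$, and to show that the class $\alpha:=\beta^{\Z/2}(\xi)\in TH^3(M)$ pulls back the universal differentials faithfully. A convenient first choice is $\xi=w_2(M)$, so that $\alpha=W_3(M)$; then the hypothesis that $M$ is non-spin$^c$ is precisely the statement that $\alpha\neq 0$, automatically giving $\opn{per}(\alpha)=2$.

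The core of the proof is the verification of two differentials in the twisted AHSS for $(M,\alpha)$:
\begin{enumerate}
\item[(a)] Show that $G_1(\alpha)=[\lambda\beta^{\Z/4}(P_2(\xi))]\in H^5(M)/\alpha H^2(M)$ has order exactly $4$, using the Antieau--Williams formula \eqref{eq:G_1}. This is a Pontrjagin-square/Bockstein computation on $M$: one must verify that $P_2(w_2)\in H^4(M;\Z/4)$ has order $4$ and that its integral Bockstein survives mod the indeterminacy $\alpha H^2(M)$.
\item[(b)] Show that $G_2(\alpha)\neq 0$. By the universal formula \eqref{eq:d7 for K_n}, this amounts to verifying that the pullback of $R_2\in H^7(K_2)$ is a nonzero permanent cycle on the $E_7$-page of the twisted AHSS; by Poincar\'e duality this is detected by a suitable class in $H_1(M)$.
\end{enumerate}
Neither (a) nor (b) can fail simultaneously given the constraints, so by Theorem \ref{thm:main} a nonzero $G_2(\alpha)$ has order exactly $2$.

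The hard part is (b). Unlike $G_1$, which admits the explicit formula \eqref{eq:G_1}, $G_2$ is a tertiary operation whose value depends on the vanishing data for the earlier differential and lies in a subquotient rather than a quotient of $H^7(M)$. Any candidate $M$ must therefore be built so that, simultaneously, $G_1(\alpha)$ has order $4$ (ruling out spin$^c$ or otherwise ``small'' examples) and the pullback of $R_2$ detects a nontrivial $1$-cycle. A plausible construction is to assemble $M$ from a non-spin$^c$ building block (such as the Wu manifold $SU(3)/SO(3)$, which carries a nontrivial $W_3$) and a low-dimensional factor engineered to provide the necessary $H^7$-class, then verify (a) and (b) by direct computation via the K\"unneth formula; alternatively one may construct $M$ by surgery on a more symmetric $8$-manifold so as to arrange the required classes in $H^2(M;\Z/2)$, $H^4(M;\Z/4)$, and $H_1(M;\Z)$ in tandem.
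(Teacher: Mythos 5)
The proposal correctly lays out the framework — reduce to showing $\operatorname{per}(\alpha)=2$ with $\operatorname{ord}(G_1(\alpha))=4$ and $G_2(\alpha)\neq0$, then invoke Theorem \ref{thm:main} to pin down $\operatorname{ord}(G_2(\alpha))=2$ — but it does not actually produce the manifold $M$. You explicitly defer the ``hard part'' (verifying $G_2(\alpha)\neq 0$) and offer only a menu of possible constructions (``assemble $M$ from a non-spin$^c$ building block ... and a low-dimensional factor ... ; alternatively one may construct $M$ by surgery''). That is precisely the content the proposition requires, so the argument is incomplete where it matters most.

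Two more specific issues. First, the opening reduction is logically gappy: knowing $\operatorname{per}(\alpha)=2$ and $\operatorname{ind}(\alpha)=16$, Theorem \ref{thm:not(2,2,4)} rules out $\perv(\alpha)=(2,2,4)$, but it does not rule out $\perv(\alpha)=(2,1,8)$, and nothing you cite does; the universal bound $a_1'\mid 8$ only constrains the cumulative products, so $(2,1,8)$ remains a priori possible. (This gap is harmless for your later direct plan, but as stated the reduction does not ``force'' $(2,4,2)$.) Second, pinning $\xi=w_2(M)$ so that $\alpha=W_3(M)$ is an extra constraint that is not needed and that makes the problem harder, since it ties the twist to a characteristic class of $M$. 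The paper does not take this route. Its construction starts from a closed orientable non-spin$^c$ $6$-manifold $N$ with $\beta\in TH^3(N)$ satisfying $\perv(\beta)=(2,4)$ (from \cite[Theorem 1.4(2)]{crowley2018topological}), sets $M_0=S^2\times N$ with $\alpha_0=\pi_N^*\beta$, and then does surgery on a framed $S^2$ representing $(2,0)\in\pi_2(M_0)$ to create a new $\Z/2$-summand in $H^3$. The class $\alpha$ is chosen to combine this new torsion with $\alpha_0$; it is \emph{not} $W_3(M)$, and the non-spin$^c$ property is verified separately by tracking $W_3$ through the surgery bordism. The product/surgery structure is what makes the key computations tractable: the K\"unneth formula identifies $R_2(\xi_0)=u\beta^{\Z/2}((\xi_0')^2)$ and controls the indeterminacy $I_2$, so that $G_2(\alpha)\neq 0$ can actually be checked. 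Your sketch recognises that this computation is the crux but does not supply it.
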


\begin{remark}
We currently do not know of any spin$^c$ counter examples to the TPIC.
\end{remark}

\begin{proof}
We start with $N$ an orientable but non-spin$^c$ $6$-manifold and $\beta \in TH^3(N)$ with
$\perv(\beta) = (2, 4)$.
Such pairs $(N, \beta)$ exist by
By \cite[Theorem 1.4(2)]{crowley2018topological}.
Set
$$ M_0 := S^2 \times N,$$
let $\pi_N \colon M_0 \to N$ and $\pi_{S^2} \colon S^2 \times N \to S^2$
be the projections and $\alpha_0 := \pi_N^*(\beta)$.
We have $\pi_2(M_0) \cong \pi_2(S^2) \times \pi_2(N) \cong \Z \oplus \pi_2(N)$.  By general position,
the homotopy class $(2, 0)$ is represented by an embedding $\bar \phi \colon S^2 \to M_0$, which
is unique up to isotopy.   Since the stable tangent bundle
of $M_0$ restricted to $S^2 \times \ast$ is trivial, it follows that the normal bundle of $\bar \phi$ is
trivial.  Hence there is an embedding
$$ \phi \colon D^6 \times S^2 \to M_0,$$
which represents $(2, 0) \in \pi_2(M_0)$ and which is unique up to isotopy since $\pi_2(SO(6)) = 0$.
We define $M$ to be the outcome of surgery on $\phi$.  That is,
the compact manifold
$$ W_\phi := (M_0 \times I) \cup_\phi (D^6 \times D^3)$$
has boundary $M_0 \sqcup M$.
Below we shall see that there is a canonical isomorphism
$$ TH^3(M) \cong \Z/2 \oplus TH^3(M_0).$$
We set $\alpha := ([1], \alpha_0) \in TH^3(M)$ and we will show that
$\perv(\alpha) = (2, 4, 2)$ and that $M$ is not spin$^c$.

The bordism $W$ has homotopy type given by
$$ W \simeq M_0 \cup_{\bar \phi} D^3 \simeq M \cup D^6.$$
For $u \in H^2(M_0)$ the pull back of a generator of $H^2(S^2)$ along $\pi_{S^2}$,
the Kunneth Theorem gives
$$H^i(M_0) = H^i(N) \oplus  u H^{i-2}(N),$$
where we have dropped $\pi_N^*$ from the notation, identifying $H^*(N)$
with its image under $\pi_N^*$.
If $i_0 \colon M_0 \to W$ is the inclusion then an elementary calculation using the
homotopy equivalence $W \simeq M_0 \cup_{\bar \phi} D^3$ gives
\begin{equation} \label{eq:WZ}
i_0^* \colon H^*(W) \to H^*(M_0)
\begin{cases}
\text{is a 
split injection onto $H^2(N) \subset H^2(M_0)$} & \ast = 2,\\
\text{is a 
split surjection with kernel $\Z/2$} & \ast = 3, \\
\text{is an isomorphism} & \ast \neq 2, 3 \\
\end{cases}
\end{equation}
and
\begin{equation} \label{eq:WZ2}
i_0^* \colon H^*(W; \Z/2) \to H^*(M_0; \Z/2)
\begin{cases}
\text{is a split surjection with kernel $\Z/2$} & \ast = 3, \\
\text{is an isomorphism} & \ast \neq 3. \\
\end{cases}
\end{equation}
\noindent
On the other hand, we have the homotopy equivalence $W \simeq M \cup D^6$, 
where rationally $H^6(W; \Q) \cong H^6(M; \Q) \oplus \Q$.
It follows that for the inclusion
$i \colon M \to W$ we have
\begin{equation} \label{eq:i}
i^* \colon H^*(W) \to H^*(M)
\begin{cases}
\text{is a split surjection with kernel $\Z$} & \ast = 6, \\
\text{is an isomorphism} & \ast \neq 6. \\
\end{cases}
\end{equation}
%
\noindent
In particular, for $j \neq 2, 3$, we have the zig-zag of homomorphisms
$$ H^j(M_0) \xleftarrow{~i_0^*~} H^j(W) \xrightarrow{~i^*~} H^j(M)$$
and for $j \neq 2, 3$ we define
$$ \psi^j := i^* \circ (i_0^*)^{-1} \colon H^j(M_0) \to H^j(M),$$
which is an isomorphism if in addition $j \neq 6$.

Now $\alpha_0 = \beta^{\Z/2}(\xi'_0)$ for some $\xi'_0 \in H^2(N; \Z/2)$ and set
$$\xi_0 := (\rho_2(u), \xi'_0) \in H^2(M_0; \Z/2) = \Z/2(\rho_2(u)) \oplus H^2(N; \Z/2),$$
where $\rho_2$ denotes reduction mod~$2$.
By \eqref{eq:WZ2}, there is a unique class $\xi_W \in H^2(W;\Z/2)$ such that $i_0^*(\xi_W) = \xi_0$
and we set $\alpha_W := \beta^{\Z/2}(\xi_W)$ and $\xi := i^*(\xi_W)$.  Then it is easy 
check that under the isomorphisms
$$  TH^3(M) \cong TH^3(W) \cong \Z/2 \oplus TH^3(M_0),$$
the element $\alpha = ([1], \alpha_0)$ satisfies $\alpha = \beta^{\Z/2}(\xi) \in TH^3(M)$.
Of course, $\alpha_0 = i_0^*(\alpha_W)$ and $\alpha=i^*(\alpha_W)$.

It remains to compute $\ord(G_1(\alpha))$ and $\ord(G_2(\alpha))$.
For $G_1(\alpha)$ we use the formula \eqref{eq:G_1}, which gives
$G_1(\alpha) = \beta^{\Z/2}(P_2(\xi^2))$.
Since $i^* \colon H^j(W) \to H^j(M)$ is an isomorphism for $j = 2, 5$, it follows that
$$ \ord(G_1(\alpha)) = \ord(G_1(\alpha_W)).$$
Now can compute $G_1(\alpha_W)$ by restricting to $M_0$.  Specifically,
$$i_0^*(G_1(\alpha_W)) = G_1(i_0^*(\alpha_W)) = 
[\beta^{\Z/2}((\rho_2(u) + \xi_0)^2)]
= [\beta^{\Z/2}(\xi_0^2)].$$
Since $H^2(M_0) \alpha \cap H^5(N) = H^2(N) \beta$, it follows that
$$ \ord(G_1(\alpha_W)) = \ord(G_1(\alpha_0)) = \ord(G_1(\beta)) = 4$$
and so $\ord(G_1(\alpha)) = 4$.

To compute $G_2(\alpha)$ we need to compute $R_2(\xi)$.
Now $R_2(\xi) = \psi^7(R_2(\xi_0))$ and
%
\begin{multline} \label{eq:R_2xi}
 R_2(\xi_0) =  \beta^{\Z/2}((\rho_2(u) + \xi'_0)^3) = \\
 \beta^{\Z/2}(\rho_2(u)^3 + 3\rho_2(u)^2\xi'_0 + 3\rho_2(u)(\xi'_0)^2 + (\xi'_0)^3) = 
 \beta^{\Z/2}(\rho_2(u) (\xi'_0)^2) = u\beta^{\Z/2}((\xi'_0)^2)\neq 0.
\end{multline}
%
Since $\psi^7$ is an isomorphism, $R_2(\xi) \neq 0$ and it remains to show that $R_2(\xi)$ does not lie in the
indeterminacy of $G_2$, which for a space $X$ is the group
%
\begin{equation}\label{eq:indet}
I_2(X) = (Sq^3_{\Z} + \cup \alpha)H^4(X) + \opn{Im}\wt{d}_5^{2,-2}(H^2(X)) \subset H^7(X).
\end{equation}%
We note that 
$I_2(M) \subseteq \psi(I_2(M_0))$ and we can use naturality to compute $I_2(M)$ using $I_2(M_0)$.
We have that
$$ H^7(M_0) = H^7(N) \oplus uH^5(N) = uH^5(N)$$
and $H^4(M_0) = H^4(N) \oplus u H^2(N)$. Now $Sq^3_{\Z}(H^4(N)) = 0$
and $\opn{Sq}^3_{\Z}(u H^2(N)) = 0$, since $\opn{Sq}^3_{\Z}$ vanishes on
any product of two degree two classes.  
%
%
Therefore we conclude
\begin{equation}\label{eq:Sq^3_Z(H^4)}
\opn{Sq}^3_{\Z}(H^4(M_0))=0.
\end{equation}
Now $[\beta^{\Z/2}((\xi_0')^2)] \neq 0 \in H^5(N)/\beta H^2(N)$ 
and by (\ref{eq:R_2xi}) 
\[R_2(\xi_0)=u\beta^{\Z}((\xi_0')^2).\]
However, since $\alpha_0 = \pi_N^*(\beta) \in H^3(M_0)$,
\[ \alpha_0H^4(M) = \alpha_0 \bigl( H^4(N) \oplus y H^2(N) \bigr) = u \beta H^2(N)\]
and so
\[R_2(\xi_0)\notin \alpha_0 H^4(M_0).\]
This and (\ref{eq:Sq^3_Z(H^4)}) show that $R_2(\xi_0)$ does not lie in 
$(\opn{Sq}^3_{\Z} + \cup \alpha)(H^4(M_0))$ 
and hence $R_2(\xi_W)$ does not lie in $(\opn{Sq}^3_{\Z} + \cup \alpha_W)(H^4(W))$.

Now $i_0^* \colon H^2(W) \to H^2(M_0)$ is an isomorphism with
$i_0^*(H^2(W)) = H^2(N)$ and $G_2(H^2(N)) = 0$ since $H^7(N) = 0$.
By naturality, it follows that $\wt{d}_5^{2,-2}(W) = 0$.
Comparing the above to (\ref{eq:indet}), it follows that $R_2(\xi_W)$ does not lie in $I_2(W)$
and so $R_2(\xi)$ does not lie in $I_2(M)$.

We must finally show that $M$ is not spin$^c$.  Recall that a manifold $X$ is not spin$^c$ if 
and only if $W_3(X) \neq 0 \in H^3(X)$.
By construction $M_0$ is not spin$^c$ and so $W_3(M_0) \neq 0$.
Since $W_3$ is a stable characteristic class, $W_3(M_0) = i_0^*(W_3(W))$
and so $W_3(W) \neq 0$.
Now $W_3(M) = i^*(W_3(W))$ and by \eqref{eq:i}, $i^* \colon H^3(W) \to H^3(M)$
is an isomorphism.  Hence $W_3(M) \neq 0$ and $M$ is not spin$^c$.
\end{proof}

\begin{proposition} \label{prop:2-4-1}
There is a closed, connected parallelisable (hence almost complex)
$8$-manifold $M$ with Brauer class $\alpha \in TH^3(M)$ such that
$\perv(\alpha) = (2, 4, 1)$.
\end{proposition}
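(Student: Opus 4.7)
The goal is to produce a closed, connected, parallelisable $8$-manifold $M$ with a Brauer class $\alpha\in TH^3(M)$ of period vector $(2,4,1)$. Writing $\alpha=\beta^{\Z/2}(\xi)$ for $\xi\in H^2(M;\Z/2)$, this amounts to two conditions: (a) $\opn{ord}(G_1(\alpha))=\opn{ord}([\beta^{\Z/4}(P_2(\xi))])=4$ in $H^5(M)/\alpha H^2(M)$; and (b) $\opn{ord}(G_2(\alpha))=1$. I plan to arrange (b) in the strongest way, by ensuring $\xi^3=0\in H^6(M;\Z/2)$, so that $R_2(\xi)=\beta^{\Z/2}(\xi^3)=0$.

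My plan is to modify the construction of Proposition \ref{prop:ceg} through a sequence of surgeries. Starting from the orientable (non-spin$^c$) $8$-manifold $M_1$ with $\perv(\alpha_1)=(2,4,2)$ produced by Proposition \ref{prop:ceg}, I would first perform surgery along an embedded $2$-sphere in $M_1$ Poincar\'e dual to $\xi_1^3\in H^6(M_1;\Z/2)$. By the computation in the proof of Proposition \ref{prop:ceg}, $\xi_1^3$ restricts on $M_0=S^2\times N$ to $u(\xi_0')^2$, so such a sphere with trivial normal bundle is provided by (a perturbation of) the $S^2$-factor of $M_0$. The surgery, which attaches a $3$-handle, kills $\xi^3$ in the resulting manifold $M_2$ and preserves cohomology in degrees $\leq 5$ by a long-exact-sequence computation for the surgery cobordism. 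Hence $\alpha$ lifts to $\alpha_2\in TH^3(M_2)$ with $\opn{ord}(G_1(\alpha_2))=4$, and $\perv(\alpha_2)=(2,4,1)$, but $M_2$ need not yet be parallelisable.

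To complete the construction, I would apply further surgeries to trivialise the stable tangent bundle of $M_2$. The obstructions to stable parallelisability of an oriented $8$-manifold are the classes $w_2, w_4, p_1, p_2$, each of which lives in degree $\leq 4$. Standard Kervaire--Milnor/Kreck surgery theory provides framed embedded spheres of dimensions $\leq 3$ along which surgery kills these obstructions one at a time. Since such low-dimensional surgeries do not affect $H^j(M;\Z)$ for $j\geq 5$, they preserve $\alpha$, the witness of $\opn{ord}(G_1(\alpha))=4$ in $H^5(M)$, and the equation $R_2(\xi)=0$ in $H^7(M)$. Promoting stable parallelisability to genuine parallelisability then requires handling an additional $\pi_7(SO(8))$-invariant, which can again be realised by a framed $S^3$-surgery, yielding the desired $M$.

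The main obstacle is the second stage: the simultaneous killing of several characteristic classes by surgery could a priori interact with the Brauer data in subtle ways, and one must also verify that each intermediate step can actually be performed (i.e.\ that the requisite framed embedded spheres exist with the correct normal invariants). The key point which makes this feasible is a dimension separation: all tangential obstructions are concentrated in degrees $\leq 4$, while the cohomological witnesses of $\opn{ord}(G_1(\alpha))=4$ and $\opn{ord}(G_2(\alpha))=1$ live in degrees $5$ and $7$ respectively. Thus the surgery programme operates in a range disjoint from the Brauer-theoretic data, and a careful bookkeeping argument confirms that the period vector $(2,4,1)$ is preserved throughout.
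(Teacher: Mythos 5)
Your proposal takes a completely different route from the paper and, as written, has several gaps that are serious enough that I don't believe the argument goes through.

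First, the surgery you propose to kill $\xi^3$ is not justified. You want to surger on a $2$-sphere ``Poincar\'e dual to $\xi_1^3 \in H^6(M_1;\Z/2)$,'' but killing the (mod $2$) homology class dual to $\xi^3$ does not obviously make the cohomology class $\xi^3$ vanish in the resulting manifold; one would need $\xi_W^3$ to die under the restriction $H^6(W;\Z/2) \to H^6(M_2;\Z/2)$ in the trace of the surgery, and this is a separate computation. Worse, your claim that surgery along a $2$-sphere ``preserves cohomology in degrees $\leq 5$'' is false: such a surgery changes $H_2$ and $H_3$ and hence, by Poincar\'e duality, $H^5$ and $H^6$ as well --- exactly the degrees where your witness for $\opn{ord}(G_1(\alpha)) = 4$ and the class $R_2(\xi)$ live. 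So the surgery is operating in precisely the range you need to control.

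Second, the ``dimension separation'' you invoke in the second stage does not hold. A parallelisable manifold is in particular spin$^c$, so you must kill $W_3 = \beta w_2 \in H^3(M)$, which lives in the same group $H^3(M) \cong \Z/2 \oplus \Z/2$ as $\alpha$ itself --- there is no degree separation from the Brauer class. Also, $p_2 \in H^8(M)$, not degree $\leq 4$. More fundamentally, vanishing of characteristic classes does not imply stable parallelisability (one needs the classifying map to $BO$ to be null), and surgery on low-dimensional spheres is not a mechanism for killing arbitrary preassigned characteristic classes one at a time; they are not independent cohomology classes. Finally, promoting stable parallelisability to genuine parallelisability for a closed even-dimensional manifold is controlled by the Euler class, and the natural fix is to adjust $\chi$ by connected sums, not by an $S^3$-surgery.

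The paper avoids all of this by starting from scratch: take a finite $5$-skeleton $K$ of $K_2 = K(\Z/2,2)$, use Wall's embedding theorem to thicken it to a compact $9$-manifold $W \simeq K$ inside $S^9$, and set $M_0 = \partial W$. Alexander duality and Mayer--Vietoris give $H^2(M_0)=0$, $TH^3(M_0) \cong \Z/2$, and a $\Z/4$ splitting off $TH^5(M_0)$, so the generator $\alpha_0$ has $\perv(\alpha_0) = (2,4,1)$ --- the higher period is automatically $1$ because $\alpha_0$ pulls back from the $5$-complex $K$. Being a hypersurface in $S^9$, $M_0$ is stably parallelisable for free; connected-summing with copies of $S^4\times S^4$ or $S^3\times S^5$ adjusts $\chi$ to obtain a genuinely parallelisable $M$ without touching $TH^3$. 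This construction is cleaner because it bakes in stable parallelisability from the outset rather than trying to retrofit it by surgery, and it produces a small $H^*$ in the relevant degrees so the period vector is forced.
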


\begin{proof}
Choose a finite $5$-skeleton $K \subset K_2$ for $K_2$.
By a theorem of Wall \cite[Embedding Theorem]{Wall1966}, there is a {\em thickening} of $K$
(i.e.\ a homotopy equivalence $\phi \colon K \to W$, where $W$ is compact
smooth manifold with simply-connected boundary) such that $W \subset S^9$
is a co-dimension zero submanifold of the $9$-sphere.
We let $M_0 := \del W$ be the boundary of $W$ and $C \subset S^9$
be the complement of the interior of $W$, so that $S^9 = W \cup_{M_0} C$.

By Alexander duality, $C$ is $2$-connected.
The Mayer-Vietoris sequence for the decomposition $S^9 = W \cup_M C$ gives that
the sum of the maps induced by the inclusions $i_W \colon M \to W$ and $i_C \colon M_0 \to C$
$$ i_{W*} \oplus i_{C*} \colon H_i(M_0) \to H_i(W) \oplus H_i(C)$$
is an isomorphism for $i = 1, \dots, 7$.  Applying the Universal Coefficient Theorem
we deduce that we have isomorphisms
$$ TH^3(M_0) \xra{i_W^*} TH^3(W) \cong TH^3(K) = TH^3(K_2) = \Z/2$$
and
$$H^2(M_0) \cong H^2(W) \cong H^2(K) = 0.$$
We also deduce that $i_W^* \colon TH^5(M_0) \to TH^5(W) \cong TH_4(K) = TH_4(K_2) \cong \Z/4$
is a split surjection.

Let $\alpha_0 \in TH^3(M_0)$ be the generator.  The above shows that $G_1(\alpha_0)$
generates a $\Z/4$-summand of $TH^5(M_0)/\alpha H^2(M_0) = TH^5(M_0)$.
Since $\alpha_0$ pulls back from $W \simeq K$ and $K$ is $5$-dimensional,
the higher periods of $\alpha_0$ are $1$ by naturality and so $\perv(\alpha_0) = (2, 4, 1)$.

Since $M_0$ is a hyper-surface in $S^9$, $M_0$ is stably paralellisable.  It follows that
the tangent bundle of $M_0$ pulls back along the degree one collapse map $c \colon M_0 \to S^8$
and that there is a vector bundle isomorphism
$$ TM_0 \cong c^*(k\tau_{S^8}),$$
where we regard $\tau_{S^8}$ as an element of $\pi_7(SO(8))$ and $k$ is some integer 
determined by the equation
$$ \chi(M_0) = 2k.$$
It follows that we may take the connected sum of $M_0$ and copies of either $S^4 \times S^4$
or $S^3 \times S^5$, to achieve a parallelisable manifold $M$ with $TH^3(M_0)$ identified with $TH^3(M)$.  Moreover, if we take $\alpha \in TH^3(M)$ corresponding to $\alpha_0 \in TH^3(M_0)$ 
under this identification, then it is clear that $\perv(\alpha) = \perv(\alpha_0)$.
This finishes the proof.
\end{proof}

\bibliographystyle{abbrv}
\bibliography{cghref}
\end{document}